\newtheorem{definition}{Definition}[section]
\newtheorem{proposition}[definition]{Proposition}
\newtheorem{theorem}[definition]{Theorem}
\newtheorem{lemma}[definition]{Lemma}
\newtheorem{remark}[definition]{Remark}
\newtheorem{corollary}[definition]{Corollary}
\newtheorem{example}[definition]{Example}
\newenvironment{proof}{{\rm Proof.}\noindent}{\hfill$\square$}
\def\I{\mathcal{I}}
\def\M{\mathcal{M}}
\def\KK{\mathbb{K}}
\def\11{\mathbf{1}}
\newcommand{\suchthat}{\;\ifnum\currentgrouptype=16 \middle\fi|\;}
\def\blfootnote{\xdef\@thefnmark{}\@footnotetext}
\def\beqa{\begin{eqnarray}}
\def\eeqa{\end{eqnarray}}
\begin{document}

\title{\bf Dendriform structures for restriction-deletion and restriction-contraction matroid Hopf algebras\let\thefootnote\relax\blfootnote{Date: \today}}
\author{Nguyen Hoang-Nghia\footnote{Nguyen.Hoang@lipn.univ-paris13.fr}, Christophe Tollu\footnote{Christophe.Tollu@lipn.univ-paris13.fr} and Adrian Tanasa\footnote{Adrian.Tanasa@lipn.univ-paris13.fr}}
\date{\today}
\maketitle

\begin{abstract}
\noindent
We endow the set of isomorphic classes of matroids with a new Hopf algebra structure, in which the coproduct is implemented via the combinatorial operations of restriction and deletion. We also initiate the investigation of dendriform coalgebra structures on matroids and introduce a monomial invariant which satisfy a convolution identity with respect to restriction and deletion.
\end{abstract}

\bigskip
\noindent
{\bf Keywords:} matroids, combinatorial Hopf algebras (CHA), dendriform coalgebras, matroid polynomials

\newpage

\section{Introduction}
\label{sect:introd}

It is widely acknowledged that major recent progress in combinatorics stems from the construction of algebraic structures associated to combinatorial objects, and from the design of algebraic invariants for those objects. For over three decades, numerous Hopf algebras with distinguished bases indexed by families of permutations, words, posets, graphs, tableaux, or variants thereof, have been brought to light (see, for example, \cite{novelli} and references within). The study of such combinatorial Hopf algebras (a class of free or cofree, connected, finitely graded bialgebras thoroughly characterized by Loday and Ronco \cite{LR98a}) has grown into an active research area; many connections with other mathematical domains and, perhaps more surprisingly, to theoretical physics (see, for example, \cite{io,io_bis} and references therein) have been uncovered and tightened. 

Since Schmitt's pioneering work \cite{s}, matroids have also been the subject of algebraic structural investigations, though to a much lesser extent than other familiar combinatorial species. In the present paper, we aim to carry the study of Hopf algebras on matroids one step forward by providing an alternative coproduct on matroids and by exploring their dendriform structures. 

Let us now outline the paper's contents. After a short reminder of the basic theory of matroids and a review of Schmitt's restriction-contraction Hopf algebra (section \ref{sec:mat-theo}), we define a new coproduct, relying on two standard operations on matroids, namely restriction and deletion, and show that the set of isomorphic classes of matroids can be endowed with a new commutative and cocommutative Hopf algebra structure, different from the one introduced by Schmitt (section \ref{sec:sel-del-Hopfalg}). We then prove that Schmitt's coproduct as well as ours can be adequately split into two pieces so as to give rise to two dendriform coalgebras (section \ref{sec:dendri-sel-del}). To the best of our knowledge, it is the first time that such an analysis has been carried out for matroids. Finally, we define a polynomial invariant of matroids which satisfy an identity which is the restriction-deletion analog of the more classical convolution identity satisfied by the Tutte polynomial for matroids (section \ref{sec:mat-pol}). Although that polynomial turns out to be a monomial, its definition exemplifies the usefulness of the theory of Hopf algebra characters in our context.

\section{Matroid theory; a restriction-contraction CHA}\label{sec:mat-theo}

\subsection{Matroid theory reminder}\label{subsec:mat-theo}

In this subsection we recall the definition and some basic properties of matroids (see, for example, 
J. Oxley's book \cite{Oxl92} or review article \cite{Oxl-rev}). 

\begin{definition}
A {\bf matroid} M is a pair $(E, \mathcal{I})$ consisting of a finite set $E$ and a collection of subsets of E satisfying the following set of axioms: $\mathcal{I}$ is non-empty, every subset of every member of $\mathcal{I}$ is also in $\mathcal{I}$ and, finally, if $X$ and $Y$ are in $\mathcal{I}$ and $|X| = |Y | + 1$, then there is an element $x$ in $X - Y$ such that $Y \cup \{x\}$ is in $\mathcal{I}$.
\end{definition}

One calls the set $E$ the {\bf ground set}. 
The elements of the set  $\mathcal{I}$ are the {\bf independent sets} of the matroid. 
A subset of $E$ that is not in $\mathcal{I}$ is called {\bf dependent}.

A particular class of matroids is the {\bf graphic matroids} (or {\bf cyclic matroids}), for whom the ground set is the set 
of edges of the graph and for whom the collection of independent sets is given by the sets of edges which do not contain all the edges of a cycle of the graph.

Let $E$ be an $n-$element set and let $\I$ be the collection of subsets 
of $E$ with at most $r$ elements, $0\le r\le n$.  The pair $(E,\I)$ 
is a matroid - the {\bf uniform matroid} $U_{r,n}$. The smallest (with respect to the cardinal of the edge set) non-graphic matroid is $U_{2,4}$.

%
The {\bf bases} of a matroid are the maximal independent sets of the respective matroid. 
Note that bases have all the same cardinality.
By relaxing this condition one then has delta-matroids \cite{dmatroids}.


Let $M=(E,\I)$ be a matroid and let $\cal B=\{ B\}$ be the collection of bases of $M$. 
Let ${\cal B}^\star = \{E - B: B \in {\cal B} \}$. 
Then ${\cal B}^\star$ is the collection of bases of a matroid $M^\star$ on E, 
the dual of $M$.

Let $M=(E,\I)$ be a matroid. 
The {\bf rank} $r(A)$ of $A \subset E$ is given by the following formula:
\begin{equation}\label{eq:rankfunc}
r(A) = max\{|B| \mbox{  s.t.  } B \in \I, B \subset A\}\ .
\end{equation}

\begin{lemma}[Lemma 1.3.1 \cite{Oxl92}]\label{lm:rankineq}The rank function $r$ of a matroid $M$ on a set $E$ satisfies the following condition: 
If $X$ and $Y$ are subsets of $E$, then \begin{equation}
r(X\cup Y) + r(X\cap Y) \leq r(X) + r(Y).
\end{equation}
\end{lemma}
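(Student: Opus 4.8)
The plan is to prove submodularity by producing a single independent set that simultaneously witnesses the ranks of $X\cup Y$ and $X\cap Y$ while bounding those of $X$ and $Y$, and then to conclude by the elementary inclusion--exclusion identity for cardinalities of finite sets. The engine behind the argument is the independence augmentation axiom from the definition of a matroid, together with its standard consequence (already noted in the excerpt for bases) that, for any fixed $A\subseteq E$, all inclusion-maximal independent subsets of $A$ share the same cardinality, namely $r(A)$.

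First I would choose a maximal independent subset $B_{\cap}$ of $X\cap Y$, so that $B_{\cap}\in\I$ and $|B_{\cap}|=r(X\cap Y)$. Since $B_{\cap}$ is an independent set contained in $X\cup Y$, the augmentation axiom allows me to enlarge it, one element at a time, to a maximal independent subset $B_{\cup}$ of $X\cup Y$; this $B_{\cup}$ then satisfies $B_{\cap}\subseteq B_{\cup}$, $B_{\cup}\subseteq X\cup Y$, and $|B_{\cup}|=r(X\cup Y)$.

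The crucial intermediate step is the identity $B_{\cup}\cap(X\cap Y)=B_{\cap}$. The inclusion $\supseteq$ is immediate from $B_{\cap}\subseteq B_{\cup}$ and $B_{\cap}\subseteq X\cap Y$. For the reverse inclusion I would observe that $B_{\cup}\cap(X\cap Y)$ is a subset of the independent set $B_{\cup}$, hence itself independent, and that it lies in $X\cap Y$; as it contains the maximal independent subset $B_{\cap}$ of $X\cap Y$, maximality forces the two sets to coincide. Consequently $|B_{\cup}\cap(X\cap Y)|=r(X\cap Y)$, while $|B_{\cup}\cap(X\cup Y)|=|B_{\cup}|=r(X\cup Y)$ because $B_{\cup}\subseteq X\cup Y$.

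Finally I would invoke the set-theoretic identity $|B_{\cup}\cap X|+|B_{\cup}\cap Y|=|B_{\cup}\cap(X\cup Y)|+|B_{\cup}\cap(X\cap Y)|$ and replace the two right-hand terms by $r(X\cup Y)$ and $r(X\cap Y)$. Since $B_{\cup}\cap X$ and $B_{\cup}\cap Y$ are independent subsets of $X$ and $Y$ respectively, their cardinalities are bounded by $r(X)$ and $r(Y)$, which yields $r(X\cup Y)+r(X\cap Y)\le r(X)+r(Y)$. I expect the main obstacle to be the justification, in the paragraph above, that maximal independent subsets have equal cardinality and therefore coincide: this is precisely where the exchange property of matroids, rather than mere set theory, carries the argument, the rest being bookkeeping with finite cardinalities.
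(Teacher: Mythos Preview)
Your argument is correct and is essentially the standard textbook proof of submodularity of the matroid rank function: extend a maximal independent set of $X\cap Y$ to one of $X\cup Y$, observe that the extension meets $X\cap Y$ exactly in the original set, and then apply inclusion--exclusion together with the bound $|I|\le r(A)$ for any independent $I\subseteq A$. The only point worth noting is that the paper does not actually supply a proof of this lemma; it is merely quoted from Oxley's book \cite{Oxl92} (Lemma~1.3.1) without proof, so there is nothing to compare against beyond saying that your proof is the one a reader would find there.
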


\begin{lemma}[Proposition 1.3.5 \cite{Oxl92}]\label{lm:rankindepen}
Let $M=(E,\I)$ be a matroid with rank function $r$ and suppose that $X \subseteq E$. Then $X$ is independent if and only if $|X| = r(X)$.
\end{lemma}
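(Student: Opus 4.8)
The plan is to prove both implications directly from the definition of the rank function in \eqref{eq:rankfunc}, according to which $r(X)$ is the maximum cardinality of an independent subset of $X$. First I would observe that this maximum is well defined: since $\I$ is non-empty and every subset of a member of $\I$ again lies in $\I$, the empty set belongs to $\I$, so the family of independent subsets of $X$ over which the maximum is taken is never empty. Both directions then reduce to elementary facts about cardinalities of finite sets, so I expect no serious obstacle.

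For the forward implication, suppose $X$ is independent. Then $X$ is itself an independent subset of $X$, hence one of the sets $B$ competing in the maximum defining $r(X)$, which gives $r(X) \ge |X|$. On the other hand, every independent subset $B \subseteq X$ satisfies $|B| \le |X|$, so that $r(X) \le |X|$. Combining the two inequalities yields $r(X) = |X|$.

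For the reverse implication, suppose $|X| = r(X)$. By the definition of the rank as a maximum, there is an independent set $B \in \I$ with $B \subseteq X$ and $|B| = r(X) = |X|$. Since $B$ is a subset of the finite set $X$ with the same cardinality, we must have $B = X$, and therefore $X = B \in \I$, i.e.\ $X$ is independent.

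The only steps requiring any care are the final set-theoretic observation in the reverse direction --- that a subset of a finite set of equal cardinality is the whole set --- and the preliminary check that the maximum is attained over a non-empty family. Neither step invokes the exchange axiom, so the argument is essentially immediate from the definitions, and I do not anticipate a genuine difficulty.
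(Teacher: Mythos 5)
Your proof is correct. There is nothing in the paper to compare it against: the lemma is stated as a quotation of Proposition 1.3.5 of \cite{Oxl92} and no proof is given in the paper itself. Your argument is the standard one, deducing both implications directly from the definition of the rank function in \eqref{eq:rankfunc}, and it correctly attends to the two small points that need checking --- that the maximum is taken over a non-empty finite family (since $\emptyset \in \I$ by the matroid axioms), and that a subset of a finite set having the same cardinality must be the whole set. Note also that your reading of $B \subset A$ in \eqref{eq:rankfunc} as non-strict inclusion is the intended one (the paper reserves $\subsetneq$ for proper inclusion), which is what makes the forward implication go through.
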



Let $M=(E,\I)$ be a matroid.
The element $e \in E$ is a \textbf{loop} if and only if $\{e\}$ is a minimal dependent set of the matroid.
The element $e\in E$ is a \textbf{coloop} if and only if, for any basis $B$, $e\in B$.

Note that loops and coloops correspond, in graph theory, to bridges and self-loops, respectively (see Fig. \ref{fig:brigde} and \ref{fig:loop}).
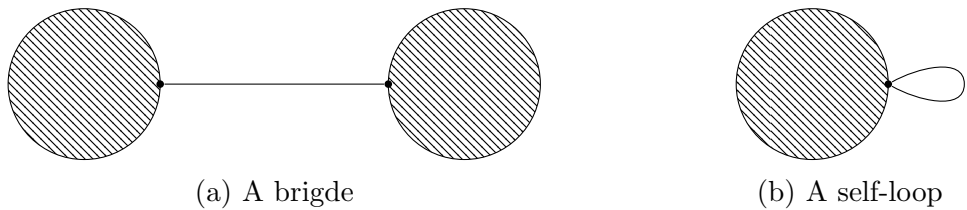
\begin{figure}[!ht]
\centering
\begin{subfigure}[b]{.45\linewidth}
\centering
\begin{tikzpicture}
\node (1) at (0,0) [inner sep=1pt,fill=black,circle] {};
\node (2) at (3,0) [inner sep=1pt,fill=black,circle] {};
\draw (1) to  (2);
\draw[pattern = north west lines] (-1,0) circle (1);
\draw[pattern = north west lines] (4,0) circle (1);
\end{tikzpicture}
\subcaption{A brigde}\label{fig:brigde}
\end{subfigure}
\begin{subfigure}[b]{.45\linewidth}
\centering
\begin{tikzpicture}
\node (1) at (0,0) [inner sep=1pt,fill=black,circle] {};
\draw (1) to [out=30,in=90] +(1,0) to [out=-90,in=-30] (1);
\draw[pattern = north west lines] (-1,0) circle (1);
\end{tikzpicture}
\subcaption{A self-loop}\label{fig:loop}
\end{subfigure}
\caption{Graphs with bridges and self-loops.}
\end{figure}

\medskip
Let $M$ be a matroid $(E,\I$) and $T$ be a subset of $E$. Let $\I|_T$ be the set $\{I\subseteq T \mbox{ s. t. } I\in\I\}$. 
The pair $(T,\I|_T)$ is a matroid, which is denoted by $M|_T$ - the {\bf restriction}  of $M$ to $T$.

Let $\I'=\{I\subseteq E-T: I \in \I\}$. 
The pair $(E-T,\I')$ is again a matroid. 
We denote this matroid by $M\backslash T$; we call this matroid the {\bf deletion} of $T$ from $M$. 

\begin{lemma}\label{lm:res-del}
Let $M$ be a matroid $(E,\I$) and $T$ be a subset of $E$. One has: 
\begin{equation}
M|_T = M\backslash_{E-T}.
\end{equation}
\end{lemma}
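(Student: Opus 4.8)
The plan is to prove the equality of the two matroids by showing that they have the same ground set and the same collection of independent sets, since a matroid $(E,\I)$ is entirely determined by this pair of data. The whole argument is a direct unfolding of the two operations recalled just above, so no genuine machinery is required.

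First I would write out the left-hand side. By the definition of restriction, $M|_T$ is the pair $(T,\I|_T)$ with $\I|_T = \{I \subseteq T : I \in \I\}$. Next I would write out the right-hand side. Applying the definition of deletion with the deleted set taken to be $E-T$ in place of $T$, the matroid $M \backslash (E-T)$ has ground set $E-(E-T)$ and collection of independent sets $\{I \subseteq E-(E-T) : I \in \I\}$.

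The only point requiring attention is the set-theoretic identity $E-(E-T) = T$, which holds precisely because $T \subseteq E$; I would state this containment explicitly to justify that the double complement collapses. Substituting this identity into both the ground set and the defining condition on the right-hand side immediately produces ground set $T$ and independent sets $\{I \subseteq T : I \in \I\} = \I|_T$, matching the left-hand side term for term and yielding $M|_T = M \backslash (E-T)$.

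There is essentially no obstacle to overcome here: the statement is really a reconciliation of two pieces of notation for what is one and the same operation. The only subtlety worth flagging is that every complement is taken inside the fixed ambient set $E$, so that the inner complement $E-T$ and the outer complement are both relative to $E$; once this is made explicit, the proof is immediate.
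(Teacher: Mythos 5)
Your proof is correct and coincides with the paper's treatment: the paper states this lemma without proof, regarding it as immediate from the definitions of restriction and deletion, which is exactly the unfolding you carry out. Your explicit note that $E-(E-T)=T$ requires $T\subseteq E$ is the only nontrivial observation, and it is handled properly.
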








\subsection{A restriction-contraction matroid Hopf algebra}\label{subsec:sel-cont-Hopfalg}

In this subsection we recall the restriction-contraction matroid Hopf algebra introduced in \cite{s} (see also \cite{cs} for details).

Let us first give the following definition:

\begin{definition}
Let $M_1=(E_1,\I_1)$ and $M_2=(E_2,\I_2)$ be two matroids s. t. $E_1$ and $E_2$ are disjoint. Let 
$M_1 \oplus M_2 := (E_1 \cup E_2,\{I_1 \cup I_2: I_1 \in \I_1, I_2 \in \I_2\})$.
Then $M_1 \oplus M_2$ is a matroid - the {\bf direct sum} of $M_1$ and $M_2$.
\end{definition}

If a matroid $N$ is obtained 
from a matroid $M$ by any combination of restrictions and contractions  
or deletions, we call the matroid $N$ a {\bf minor} of $M$.
We write that a family of matroids is {\bf minor-closed} if 
it is closed under formation of minors and direct sums.
If $\mathcal{M}$ is a minor-closed family of matroids, we denote by $\widetilde{\mathcal{M}}$ 
the set of isomorphic classes of matroids 
belonging to $\mathcal{M}$. 
Direct sums induce a product on  $\widetilde{\mathcal{M}}$ (see \cite{s} for details). 
We denote by  $k(\widetilde{\mathcal{M}})$ the 
monoid algebra of $\widetilde{\mathcal{M}}$ over 
some commutative ring
$k$ with unit.


One has

\begin{proposition}
(Proposition $2.1$ of \cite{cs})
\label{prop-cs}
If $\mathcal{M}$ is a minor-closed family of matroids 
then $k(\widetilde{\mathcal{M}})$ is a coalgebra, 
with coproduct $\Delta$ and counit $\epsilon$ respectively
determined by 
\beqa
\label{defc}
\Delta^{(I)}(M) = \sum_{A\subseteq E} M|A \otimes M/A
\eeqa
and by 
$\epsilon(M) = \begin{cases} 1, \mbox{ if } E = \emptyset, \\ 0 \mbox{ otherwise ,} \end{cases}$ for all $M = (E,\mathcal{I}) \in \mathcal{M}$. If, furthermore, the family $\mathcal{M}$ is closed under formation of direct sums, then $k(\widetilde{\mathcal{M}})$ is a Hopf algebra, with product induced by direct sum.
\end{proposition}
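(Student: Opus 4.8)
The plan is to establish, in order, that $\Delta^{(I)}$ (which I will abbreviate $\Delta$) is a well-defined coassociative coproduct with counit $\epsilon$, then that the direct-sum product turns $k(\widetilde{\M})$ into a bialgebra, and finally to invoke the connected-graded structure to produce the antipode for free. I would begin with well-definedness. Since $\M$ is minor-closed, for every $M=(E,\I)\in\M$ and every $A\subseteq E$ both $M|A$ and $M/A$ again lie in $\M$, so each summand of $\Delta(M)$ is a legitimate element of $k(\widetilde{\M})\otimes k(\widetilde{\M})$. One then checks that the expression depends only on the isomorphism class of $M$: an isomorphism $M\to M'$ is a bijection of ground sets that carries subsets to subsets and commutes with restriction and contraction, hence induces a term-by-term identification of $\Delta(M)$ with $\Delta(M')$. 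We extend $\Delta$ and $\epsilon$ $k$-linearly.

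For coassociativity I would expand both $(\Delta\otimes\mathrm{id})\Delta(M)$ and $(\mathrm{id}\otimes\Delta)\Delta(M)$ into triple sums indexed by subsets of $E$. The identity then reduces to the standard minor relations: for $B\subseteq A\subseteq E$ one has $(M|A)|B=M|B$ and $(M|A)/B=(M/B)|(A\setminus B)$ (the latter from Lemma \ref{lm:res-del} together with the commutation of deletion and contraction on disjoint sets), while for contractions $(M/A)/C=M/(A\cup C)$. Using these, the left-hand term attached to a nested pair $B\subseteq A$, namely $M|B\otimes(M/B)|(A\setminus B)\otimes M/A$, matches the right-hand term attached to the disjoint pair $(A,C)=(B,A\setminus B)$, namely $M|A\otimes(M/A)|C\otimes M/(A\cup C)$, under the reindexing $A\mapsto B$, $C\mapsto A\setminus B$; this identifies the two triple sums. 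The counit axiom is immediate: $\epsilon(M|A)$ is nonzero only for $A=\emptyset$, leaving $M/\emptyset=M$, and $\epsilon(M/A)$ only for $A=E$, leaving $M|E=M$. Hence $k(\widetilde{\M})$ is a coalgebra.

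Next I would treat bialgebra compatibility. The product is the linear extension of $\oplus$, with unit the class $\11$ of the empty matroid. The crucial combinatorial input is that for $M_1=(E_1,\I_1)$, $M_2=(E_2,\I_2)$ with $E_1\cap E_2=\emptyset$, subsets $A\subseteq E_1\cup E_2$ correspond bijectively to pairs $(A_1,A_2)$ with $A_i=A\cap E_i$, and that both operations distribute over the direct sum: $(M_1\oplus M_2)|A=(M_1|A_1)\oplus(M_2|A_2)$ and $(M_1\oplus M_2)/A=(M_1/A_1)\oplus(M_2/A_2)$. Summing over $A$ and regrouping (the commutativity of the algebra lets us reorder the tensor factors freely) yields $\Delta(M_1\oplus M_2)=\Delta(M_1)\,\Delta(M_2)$, so $\Delta$ is multiplicative; likewise $\epsilon(M_1\oplus M_2)=\epsilon(M_1)\epsilon(M_2)$, since a direct sum has empty ground set exactly when both summands do. Together with the trivial unit compatibilities $\Delta(\11)=\11\otimes\11$ and $\epsilon(\11)=1$, this makes $k(\widetilde{\M})$ a bialgebra.

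Finally, to obtain the antipode I would grade $k(\widetilde{\M})$ by the cardinality $|E|$ of the ground set. The product is additive in degree and $\Delta$ sends degree $n$ into $\bigoplus_{i+j=n}$ of degree $i$ tensored with degree $j$, so the bialgebra is graded; its degree-zero component is spanned by the empty matroid alone, so it is connected. A connected graded bialgebra admits a unique antipode given by the standard recursion on the grading, whence $k(\widetilde{\M})$ is a Hopf algebra. I expect the main obstacle to be the coassociativity computation, specifically keeping the three minor identities and the reindexing between nested and disjoint subset pairs straight; by contrast, the multiplicativity of $\Delta$ and the existence of the antipode are comparatively routine once the minor-theoretic facts above are in hand.
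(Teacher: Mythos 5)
Your proposal is correct, and it follows essentially the same strategy the paper itself employs: note that the paper never proves this proposition (it is recalled from Crapo--Schmitt \cite{cs}), but your argument mirrors, step for step, the paper's proofs of the analogous restriction-deletion statements --- expanding coassociativity into triple sums and reindexing nested versus disjoint pairs of subsets (Proposition \ref{prop:coass}), using the distributivity of restriction/contraction over direct sums for multiplicativity of the coproduct (Lemma \ref{lm:comp} and Proposition \ref{prop:comp}), and invoking connectedness of the grading by $|E|$ to obtain the antipode (Theorem \ref{thm:Hopf}). The only ingredient you use that the paper does not state explicitly is the commutation of contraction with deletion on disjoint sets, needed for $(M|A)/B=(M/B)|(A\setminus B)$; this is standard (it is part of Proposition 3.1.26 of \cite{Oxl92}) and is the correct replacement for equation \eqref{eq:resdel} when deletion is replaced by contraction in the second tensor leg.
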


In the rest of the paper, we follow \cite{cs}
and, by a slight abuse of notation, we denote in the same way a matroid and its isomorphic class,
since the distinction will be clear from the context (as it is already in 
Proposition \ref{prop-cs}). This is the same for the restriction-deletion matroid Hopf algebra that we will introduce in the following section.

The empty matroid (or $U_{0,0}$) is the unit of this Hopf algebra  and is denoted by $\mathbf{1}$. 

\begin{example}(Example 2.4 of \cite{cs})
Let $M = U_{k,n}$ be a uniform matroid with rank $k$. Its coproduct is given by
\[\Delta^{(I)}(U_{k,n}) = \sum_{i=0}^k \binom{n}{i} U_{i,i}\otimes U_{k-i,n-i} + \sum_{i=k+1}^n \binom{n}{i} U_{k,i}\otimes U_{0,n-i}\ .\]
\end{example}

\section{A restriction-deletion matroid Hopf algebra}\label{sec:sel-del-Hopfalg}


Let us define the following restriction-deletion map:

\begin{equation}\label{eq:coprod-sel-del}
\Delta^{(II)}:k(\widetilde{\mathcal{M}})\to k(\widetilde{\mathcal{M}})\otimes k(\widetilde{\mathcal{M}}),\ \
\Delta^{(II)} (M) := \sum_{A \subseteq E} M\mid A \otimes M \backslash A.
\end{equation}


\begin{example}
One has
\begin{itemize}
\item[1)] If $2k \leq n$, 
\begin{eqnarray}
\Delta^{(II)} (U_{k,n}) &=& \sum_{0 \leq i \leq k} {n \choose i} U_{i,i} \otimes U_{k,n-i} + \sum_{\substack{k < i \leq n \\ k \leq n-i}}{n \choose i} U_{k,i} \otimes U_{k,n-i} \cr && + \sum_{\substack{k < i \leq n \\ n-i < k}} {n \choose i}U_{k,i} \otimes U_{n-i,n-i}.
\end{eqnarray}
\item[2)] If $n < 2k$, 
\begin{eqnarray}
\Delta^{(II)} (U_{k,n}) &=& \sum_{\substack{0 \leq i \leq k\\ k \leq n-i}} {n \choose i}U_{i,i} \otimes U_{k,n-i} + \sum_{\substack{0 \leq i \leq k \\ n-i < k}} {n \choose i}U_{i,i} \otimes U_{n-i,n-i} \cr && + \sum_{k < i \leq n} {n \choose i}U_{k,i} \otimes U_{n-i,n-i}.
\end{eqnarray}
\end{itemize}
\end{example}

One has:

\begin{lemma}[Proposition 3.1.26 of \cite{Oxl92}]
\label{lm:coassoc}
Let $M=(E,\I)$ be a matroid.
\begin{itemize}
\item[1)] Let $X'$ be a subset of $X$ which is a subset of the ground set $E$. One has 
\begin{subequations}
\begin{align}
(M\mid X) \mid X' &= M\mid X', \label{eq:resres}\\
(M\mid X) \backslash X' &= M \mid (X -X'). \label{eq:resdel}
\end{align}
\end{subequations}
\item[2)] Let $X$ and $Y$ be subsets of $E$ such that $X$ and $Y$ are disjoint, one has 
\begin{subequations}
\begin{align}
(M\backslash X) \mid Y &= M \mid Y, \label{eq:delres}\\
(M\backslash X) \backslash Y &= M \backslash (X \cup Y). \label{eq:deldel}
\end{align}
\end{subequations}
\end{itemize}
\end{lemma}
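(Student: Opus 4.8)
The plan is to verify each of the four identities directly from the definitions of restriction and deletion given above, keeping careful track of both the ground set and the collection of independent sets produced by each operation. Recall that $M\mid T$ has ground set $T$ and independent sets $\I|_T = \{I \subseteq T : I \in \I\}$, while $M\backslash T$ has ground set $E-T$ and independent sets $\{I \subseteq E-T : I \in \I\}$. In every case, the inner operation yields a matroid on some intermediate ground set $G$, and the outer operation retains exactly those independent sets contained in a prescribed subset $S \subseteq G$; the crucial simplification is that, since $S \subseteq G$, the compound condition ``$I \subseteq S$ and $I \subseteq G$'' collapses to the single condition ``$I \subseteq S$''.

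First I would establish part 1). For \eqref{eq:resres}, the hypothesis $X' \subseteq X$ ensures that $(M\mid X)\mid X'$ is well defined; unwinding the definitions, its independent sets are $\{I \subseteq X' : I \subseteq X,\ I \in \I\}$, and $X' \subseteq X$ renders the clause $I \subseteq X$ redundant, so we recover $\I|_{X'}$, that is, $M\mid X'$. For \eqref{eq:resdel}, deleting $X'$ from $M\mid X$ leaves ground set $X - X' \subseteq X$, and the same redundancy argument reduces the independent sets to $\{I \subseteq X-X' : I \in \I\} = \I|_{X-X'}$, which is precisely $M\mid(X-X')$.

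For part 2), the disjointness of $X$ and $Y$ plays the role that $X' \subseteq X$ played above, since it guarantees $Y \subseteq E-X$, the ground set of $M\backslash X$. Equation \eqref{eq:delres} then follows from the identical collapse of ``$I \subseteq Y$ and $I \subseteq E-X$'' to ``$I \subseteq Y$'', and \eqref{eq:deldel} from the observation that $(E-X)-Y = E-(X\cup Y)$ together with the same simplification of the independent-set condition. Alternatively, one may deduce part 2) from part 1) by way of Lemma \ref{lm:res-del}, rewriting each deletion as a restriction to its complement, but the direct route is already immediate.

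Since all four statements reduce to elementary manipulations of subset conditions, I do not expect any genuine obstacle. The only point that demands care is the bookkeeping of ground sets: one must check at each step that the subset to which the outer operation is applied actually lies inside the intermediate ground set, which is exactly what the hypotheses $X' \subseteq X$ and $X \cap Y = \emptyset$ secure. Getting these containments right is what legitimizes the redundancy simplifications that drive the whole argument.
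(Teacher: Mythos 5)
Your proof is correct and takes the same approach as the paper, which simply states that these identities follow directly from the definitions of restriction and deletion; you have just spelled out the details (ground-set bookkeeping and the collapse of redundant subset conditions) that the paper leaves implicit.
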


\begin{proof}
These identities follow directly from the definitions of restriction and deletion for matroids (see previous section).
\end{proof}

\medskip
\begin{proposition}\label{prop:coass}
The coproduct in \eqref{eq:coprod-sel-del} is coassociative.
\end{proposition}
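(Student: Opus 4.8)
The plan is to establish coassociativity by checking the identity $(\Delta^{(II)} \otimes \mathrm{id}) \circ \Delta^{(II)} = (\mathrm{id} \otimes \Delta^{(II)}) \circ \Delta^{(II)}$ directly on an arbitrary matroid $M = (E,\I)$, reducing both sides to one and the same sum indexed by ordered partitions of the ground set $E$ into three (possibly empty) blocks. The engine of the computation is Lemma \ref{lm:coassoc}, which is exactly what lets us collapse the iterated restrictions and deletions.

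First I would expand the left-hand side. Applying $\Delta^{(II)}$ once and then $\Delta^{(II)}$ to the left tensor factor (whose ground set is $A$) gives
\[
(\Delta^{(II)} \otimes \mathrm{id})\,\Delta^{(II)}(M) = \sum_{A \subseteq E}\ \sum_{B \subseteq A} (M\mid A)\mid B \otimes (M\mid A)\backslash B \otimes M\backslash A.
\]
Using identities \eqref{eq:resres} and \eqref{eq:resdel} of part 1) of Lemma \ref{lm:coassoc}, the first two factors become $M\mid B$ and $M\mid(A-B)$, so the left-hand side equals $\sum_{A \subseteq E}\sum_{B \subseteq A} M\mid B \otimes M\mid(A-B) \otimes M\backslash A$.

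Next I would expand the right-hand side in the same manner. Applying $\Delta^{(II)}$ to the right tensor factor (whose ground set is $E-A$) gives
\[
(\mathrm{id} \otimes \Delta^{(II)})\,\Delta^{(II)}(M) = \sum_{A \subseteq E}\ \sum_{C \subseteq E-A} M\mid A \otimes (M\backslash A)\mid C \otimes (M\backslash A)\backslash C.
\]
Since $C \subseteq E-A$, the subsets $A$ and $C$ are disjoint, so the hypotheses of part 2) of Lemma \ref{lm:coassoc} are satisfied and identities \eqref{eq:delres} and \eqref{eq:deldel} simplify the last two factors to $M\mid C$ and $M\backslash(A\cup C)$. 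Thus the right-hand side equals $\sum_{A \subseteq E}\sum_{C \subseteq E-A} M\mid A \otimes M\mid C \otimes M\backslash(A\cup C)$.

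Finally I would match the two expressions by a change of summation variable. On the left I set $S_1 = B$, $S_2 = A-B$, $S_3 = E-A$; on the right I set $S_1 = A$, $S_2 = C$, $S_3 = E-(A\cup C)$. In both cases $(S_1,S_2,S_3)$ ranges exactly once over all ordered triples of pairwise disjoint subsets with $S_1 \cup S_2 \cup S_3 = E$, and each summand reads $M\mid S_1 \otimes M\mid S_2 \otimes M\backslash(S_1 \cup S_2)$ — note that $M\backslash A = M\backslash(S_1\cup S_2)$ on the left and $M\backslash(A\cup C) = M\backslash(S_1\cup S_2)$ on the right. Hence the two sides agree term by term. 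I expect the only delicate point to be the bookkeeping of this reindexing: verifying that the correspondence between $\{(A,B): B\subseteq A\subseteq E\}$, ordered tripartitions of $E$, and $\{(A,C): A\subseteq E,\ C\subseteq E-A\}$ is a genuine bijection, and confirming that the disjointness condition needed for part 2) of Lemma \ref{lm:coassoc} holds throughout; the algebraic simplifications themselves are immediate from the lemma.
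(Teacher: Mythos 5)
Your proposal is correct and follows essentially the same route as the paper's own proof: both expand the two sides of the coassociativity identity, collapse the iterated restrictions and deletions via parts 1) and 2) of Lemma \ref{lm:coassoc}, and then identify the two resulting sums by a change of summation variable (the paper sets $X\cup Y\mapsto A$ where you phrase the same reindexing as a bijection with ordered tripartitions of $E$). Your explicit bookkeeping of the tripartition bijection is a slightly more careful presentation of the paper's final reindexing step, but it is the same argument.
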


\begin{proof} Let $M=(E,\I)$ be a matroid. Let us calculate the left hand side (LHS) and right hand side (RHS) of the coassociativity identity. One has
\begin{eqnarray}\label{eq:lsh}
&&(\Delta^{(II)} \otimes Id) \circ \Delta^{(II)} (M) = \sum_{A\subseteq E} \Delta^{(II)}(M\mid A) \otimes M\backslash A \nonumber  \\
&&=\sum_{A\subseteq E} \left(\sum_{B \subseteq A} (M\mid A)\mid B \otimes (M\mid A)\backslash B \right) \otimes M\backslash A \cr
&& = \sum_{B \subseteq A\subseteq E} M\mid B \otimes M\mid (A-B) \otimes M\backslash A,
\end{eqnarray}
and
\begin{eqnarray}\label{eq:rhs}
(Id \otimes \Delta^{(II)}) \circ \Delta^{(II)} (M) &=& \sum_{A\subseteq E} M\mid A \otimes \Delta^{(II)}(M\backslash A) \cr
&=& \sum_{A\subseteq E} M\mid A \otimes \left( \sum_{C \in E-A} (M\backslash A)\mid C \otimes (M\backslash A) \backslash C \right)\cr
&=& \sum_{A\subseteq E} M\mid A \otimes \left( \sum_{C \in E-A} M\mid C \otimes M\backslash (A\cup C) \right)\cr
&=& \sum_{A\subseteq A\cup C \subseteq E} M\mid A \otimes   M\mid (A \cup C -A) \otimes M\backslash (A\cup C)\cr
&=& \sum_{C \subseteq A \subseteq E} M\mid C \otimes M\mid (A-C) \otimes M\backslash A.
\end{eqnarray}

Equations \eqref{eq:lsh} and \eqref{eq:rhs} lead to the conclusion.
\end{proof}

\medskip
%
Let us explicitly check the coassociativity of $\Delta^{(II)}$ on $U_{2,4}$.
\begin{eqnarray}
&& (\Delta^{(II)} \otimes Id) \circ \Delta^{(II)} (U_{2,4}) =  \11 \otimes \11 \otimes U_{2,4} + 4(U_{1,1} \otimes \11 + \11 \otimes U_{1,1} )\otimes U_{2,3} + 6(\11 \otimes U_{2,2} \cr
&& + 2 U_{1,1}\otimes U_{1,1} + U_{2,2} \otimes \11) \otimes U_{2,2} + 4(\11 \otimes U_{2,3} + 3 U_{1,1}\otimes U_{2,2} +3U_{2,2} \otimes U_{1,1} + U_{2,3}\otimes\11) \cr
&& \otimes U_{1,1} + (\11 \otimes U_{2,4} +4U_{1,1} \otimes U_{2,3} + 6 U_{2,2} \otimes U_{2,2}  + 4U_{2,3} \otimes U_{1,1} + U_{2,4} \otimes \11) \otimes \11 \cr
&& = \11 \otimes (\11 \otimes U_{2,4} +4U_{1,1} \otimes U_{2,3} + 6 U_{2,2} \otimes U_{2,2}  + 4U_{2,3} \otimes U_{1,1} + U_{2,4} \otimes \11) +     4U_{1,1}\otimes \cr
&& (\11 \otimes U_{2,3} + 3 U_{1,1}\otimes U_{2,2} +3U_{2,2} \otimes U_{1,1} + U_{2,3}\otimes\11) + 6U_{2,2} \otimes (\11 \otimes U_{2,2} + 2 U_{1,1}\otimes U_{1,1} \cr
&&+ U_{2,2} \otimes \11) + 4U_{2,3}\otimes (U_{1,1} \otimes \11 + \11 \otimes U_{1,1}) + U_{2,4}\otimes \11 \otimes \11  \cr
&&= (Id\otimes \Delta^{(II)}) \circ \Delta^{(II)} (U_{2,4}).
\end{eqnarray}

\medskip
\begin{proposition}\label{prop:cocommu}
The coproduct in \eqref{eq:coprod-sel-del} is cocommutative.
\end{proposition}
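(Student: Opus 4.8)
The plan is to prove cocommutativity in its standard form, namely that $\Delta^{(II)}$ is invariant under the flip map $\tau$ defined by $\tau(x \otimes y) = y \otimes x$; that is, to show $\tau \circ \Delta^{(II)} = \Delta^{(II)}$. Applying $\tau$ to the defining formula \eqref{eq:coprod-sel-del} immediately gives
\[
\tau \circ \Delta^{(II)}(M) = \sum_{A \subseteq E} M\backslash A \otimes M\mid A ,
\]
so the whole task reduces to re-indexing this sum so as to recover the original right-hand side of \eqref{eq:coprod-sel-del}.

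The key step is the complementation substitution $B := E - A$, which is a bijection on the power set of $E$: as $A$ ranges over all subsets of $E$, so does $B$. Under this substitution I would invoke Lemma \ref{lm:res-del} in both directions. On the one hand, $M\backslash A = M\backslash(E - B)$, and Lemma \ref{lm:res-del} applied with $T = B$ gives $M\mid B = M\backslash(E - B)$, whence $M\backslash A = M\mid B$. On the other hand, $M\mid A = M\mid(E - B)$, and Lemma \ref{lm:res-del} applied with $T = E - B$ (whose complement in $E$ is $B$) gives $M\mid(E - B) = M\backslash B$, whence $M\mid A = M\backslash B$. Feeding these two identities into the flipped sum yields
\[
\tau \circ \Delta^{(II)}(M) = \sum_{B \subseteq E} M\mid B \otimes M\backslash B = \Delta^{(II)}(M),
\]
which is precisely the defining formula and therefore establishes cocommutativity.

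I would remark that the argument in fact goes through at the level of actual matroids, not merely isomorphism classes, since Lemma \ref{lm:res-del} asserts an honest equality of matroids on a common ground set; passing to isomorphism classes is then automatic. The only genuine content is the complementation symmetry interchanging restriction and deletion, so I do not anticipate any real obstacle: the sole point requiring care is to apply Lemma \ref{lm:res-del} in the two correct directions (once with $T = B$ and once with $T = E - B$) and to verify that $A \mapsto E - A$ is indeed a bijection on subsets, which is immediate.
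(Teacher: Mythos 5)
Your proposal is correct and follows essentially the same route as the paper: apply the flip map $\tau$ and then use the complementation identity of Lemma \ref{lm:res-del} to exchange restriction and deletion, re-indexing the sum by $A \mapsto E-A$. The paper's own proof is just a terser version of the same argument (it rewrites the flipped summands via Lemma \ref{lm:res-del} and leaves the re-indexing implicit), so your extra care about the bijection and the two directions of the lemma only makes the same proof more explicit.
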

\begin{proof}
Let $\tau$ be a map $\widetilde{\M} \otimes \widetilde{\M} \longrightarrow \widetilde{\M} \otimes \widetilde{\M}$ defined by $M_1 \otimes M_2 \longmapsto M_2 \otimes M_1$. Using Lemma \ref{lm:res-del}, for $M=(E,\I) \in \widetilde{M}$, one has 
\begin{eqnarray}
\tau\circ \Delta^{(II)} (M) &=& \tau \left(\sum_{A\in E} M\mid A \otimes M \backslash A\right) = \sum_{A\in E} M \backslash A \otimes M\mid A 
= \sum_{A\in E} M \mid (E-A) \otimes M\backslash (E-A) \cr
&=& \Delta^{(II)} (M).
\end{eqnarray}
\end{proof}

\medskip
\begin{proposition}
$k(\widetilde{\M})$ is a cocommutative coalgebra with coproduct $\Delta^{(II)}$ and counit $\epsilon$ given by \begin{equation}\label{eq:counit}
\epsilon(M) = \begin{cases} 1 \; \mbox{ if } E=\emptyset \\ 0 \; \mbox{ otherwise,} \end{cases} \mbox{for all } M=(E,\I) \in \M.
\end{equation}
\end{proposition}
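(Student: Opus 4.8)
The plan is to establish the three properties packaged into the statement. Coassociativity of $\Delta^{(II)}$ is precisely Proposition \ref{prop:coass}, and cocommutativity is Proposition \ref{prop:cocommu}; I would simply invoke these. Cocommutativity is not among the coalgebra axioms proper but is asserted here, so it belongs to the list. The one genuinely new verification is the counit axiom, namely that $(\epsilon \otimes Id)\circ \Delta^{(II)} = Id = (Id \otimes \epsilon)\circ \Delta^{(II)}$ under the canonical identifications $k \otimes k(\widetilde{\M}) \cong k(\widetilde{\M}) \cong k(\widetilde{\M})\otimes k$.

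To verify the left counit identity, I would apply $\epsilon \otimes Id$ to $\Delta^{(II)}(M) = \sum_{A \subseteq E} M\mid A \otimes M\backslash A$, producing the scalar-weighted sum $\sum_{A \subseteq E} \epsilon(M\mid A)\, M\backslash A$. By the definition \eqref{eq:counit} of $\epsilon$, the coefficient $\epsilon(M\mid A)$ vanishes unless the ground set of $M\mid A$ is empty, which occurs exactly when $A = \emptyset$; the sum therefore collapses to the single term $M\backslash \emptyset = M$. The right counit identity is entirely symmetric: applying $Id \otimes \epsilon$ leaves $\sum_{A\subseteq E} \epsilon(M\backslash A)\, M\mid A$, where $\epsilon(M\backslash A)$ is nonzero only when $E - A = \emptyset$, i.e. $A = E$, leaving the single surviving term $M\mid E = M$.

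I do not expect a real obstacle here. The only point deserving a remark is well-definedness on isomorphism classes, but this is already secured by the fact that $\Delta^{(II)}$ and $\epsilon$ were defined directly on $k(\widetilde{\M})$ in \eqref{eq:coprod-sel-del} and \eqref{eq:counit}. The structural observation that renders the argument immediate is that $\epsilon$ is supported on the single degree-zero generator $U_{0,0}$, so it isolates exactly the two extreme terms $A = \emptyset$ and $A = E$ of the coproduct sum; on these, restriction and deletion act as the identity, since $M\backslash \emptyset = M$ and $M\mid E = M$ follow at once from the definitions of deletion and restriction.
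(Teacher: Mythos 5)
Your proof is correct and takes the same route as the paper: the paper's own proof simply states that the result ``follows directly from the definition \eqref{eq:counit}'', and your verification---that $\epsilon$ isolates the $A=\emptyset$ term under $\epsilon\otimes Id$ and the $A=E$ term under $Id\otimes\epsilon$, each of which reduces to $M$---is exactly the direct check the paper leaves implicit, combined with the already-established Propositions \ref{prop:coass} and \ref{prop:cocommu}. No gap; you have merely written out the details the authors omitted.
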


\begin{proof}
The proof follows directly from the definition \eqref{eq:counit}.
\end{proof}

\medskip
\begin{lemma}[Proposition 4.2.23 \cite{Oxl92}]\label{lm:comp}
Let $M_1$ and $M_2$ be two matroids. Let $A_1$ and $A_2$ be subset of $E_1$ and $E_2$, respectively. One then has
\begin{itemize}
\item[1)] \begin{equation}
M_1 \mid A_1 \oplus M_2 \mid A_2 = (M_1 \oplus M_2) \mid (A_1 \cup A_2).
\end{equation}
\item[2)] \begin{equation}
M_1 \backslash A_1 \oplus M_2 \backslash A_2 = (M_1 \oplus M_2) \backslash (A_1 \cup A_2).
\end{equation}
\end{itemize}
\end{lemma}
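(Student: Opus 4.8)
The plan is to prove both identities by showing that the two matroids appearing on each side coincide, that is, that they have the same ground set and the same collection of independent sets. Since part 2) can be deduced from part 1) together with Lemma \ref{lm:res-del}, the core of the argument lies in establishing 1) directly from the definitions of direct sum and restriction.

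For part 1), I would first observe that both $M_1 \mid A_1 \oplus M_2 \mid A_2$ and $(M_1 \oplus M_2) \mid (A_1 \cup A_2)$ have ground set $A_1 \cup A_2$, so it only remains to compare independent sets. Unwinding the definitions, the independent sets on the left-hand side are the sets $I_1 \cup I_2$ with $I_1 \in \I_1$, $I_1 \subseteq A_1$ and $I_2 \in \I_2$, $I_2 \subseteq A_2$, whereas those on the right-hand side are the sets $J_1 \cup J_2$ with $J_1 \in \I_1$, $J_2 \in \I_2$ and $J_1 \cup J_2 \subseteq A_1 \cup A_2$. The main point --- and the only genuine subtlety --- is to see that these two descriptions agree, and this is exactly where the disjointness hypothesis $E_1 \cap E_2 = \emptyset$ enters. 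Since $J_1 \subseteq E_1$ while $A_2 \subseteq E_2$ is disjoint from $E_1$, the containment $J_1 \cup J_2 \subseteq A_1 \cup A_2$ forces $J_1 \subseteq (A_1 \cup A_2) \cap E_1 = A_1$, and symmetrically $J_2 \subseteq A_2$; the reverse inclusion is immediate. This yields the equality of independent sets, hence of the two matroids.

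For part 2), rather than repeating a direct computation I would reduce to part 1) via Lemma \ref{lm:res-del}, which identifies deletion with restriction to the complementary set. Writing $M_i \backslash A_i = M_i \mid (E_i - A_i)$ for $i = 1, 2$ and applying part 1) gives
\[
M_1 \backslash A_1 \oplus M_2 \backslash A_2 = (M_1 \oplus M_2) \mid \big((E_1 - A_1) \cup (E_2 - A_2)\big).
\]
It then suffices to check the set-theoretic identity $(E_1 - A_1) \cup (E_2 - A_2) = (E_1 \cup E_2) - (A_1 \cup A_2)$, which again follows at once from $E_1 \cap E_2 = \emptyset$ (an element removed from the union must be removed from the part in which it actually lives). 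A final application of Lemma \ref{lm:res-del} rewrites the restriction on the right as $(M_1 \oplus M_2) \backslash (A_1 \cup A_2)$, completing the proof. Throughout, the recurring ingredient is the careful use of disjointness to split subsets of $E_1 \cup E_2$ into their $E_1$- and $E_2$-parts; everything else is a direct unwinding of the definitions.
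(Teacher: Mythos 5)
Your proof is correct. Note that the paper itself offers no actual argument here: its proof of Lemma \ref{lm:comp} is the single sentence that the identities ``can be checked directly from the definitions of direct sum, restriction and deletion,'' so your write-up supplies precisely the verification the authors leave implicit. Your part 1) is the direct check the paper presumably has in mind, and you correctly isolate the one nontrivial point, namely that disjointness of $E_1$ and $E_2$ forces $J_1 \subseteq A_1$ and $J_2 \subseteq A_2$ in the description of the independent sets of $(M_1 \oplus M_2)\mid(A_1\cup A_2)$. Your part 2) departs mildly from a second direct check: you reduce it to part 1) via Lemma \ref{lm:res-del} (deletion as restriction to the complement) together with the set identity $(E_1 - A_1)\cup(E_2 - A_2) = (E_1\cup E_2) - (A_1\cup A_2)$, which again uses disjointness. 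This reduction is a clean structural economy --- it proves the deletion statement without re-examining independent sets --- and it is fully consistent with the paper's own toolkit, since Lemma \ref{lm:res-del} is stated earlier and the paper uses the same restriction/deletion duality elsewhere (e.g.\ in the proof of cocommutativity, Proposition \ref{prop:cocommu}).
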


\begin{proof}
One can check these identities directly from the definitions of direct sum, restriction and deletion for matroids (see again the previous section). 
\end{proof}

\medskip
Let $\oplus^{\otimes 2}$ denote $(\oplus \otimes \oplus)\circ \tau_{23}$ where $\tau_{23}$ is the map $\widetilde{\M} \otimes \widetilde{\M} \otimes \widetilde{\M} \otimes \widetilde{\M} \longrightarrow \widetilde{\M} \otimes \widetilde{\M}\otimes \widetilde{\M} \otimes \widetilde{\M}$ defined by $M_1 \otimes M_2 \otimes M_3 \otimes M_4 \longmapsto M_1 \otimes M_3 \otimes M_2 \otimes M_4$.

\begin{proposition}
\label{prop:comp}
Let $M_1$ and $M_2$ be two matroids. One has 
\begin{equation}
\Delta^{(II)} (M_1 \oplus M_2) = \Delta^{(II)} (M_1) \oplus^{\otimes 2} \Delta^{(II)}(M_2).
\end{equation}
\end{proposition}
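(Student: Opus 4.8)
The plan is to expand both sides of the claimed identity over subsets and to verify that the resulting double sums agree, the only genuine content being a bijection between subsets of a disjoint union and pairs of subsets, combined with Lemma~\ref{lm:comp}.

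First I would write the ground set of $M_1 \oplus M_2$ as the disjoint union $E_1 \cup E_2$ and note that every subset $A \subseteq E_1 \cup E_2$ decomposes uniquely as $A = A_1 \cup A_2$ with $A_1 = A \cap E_1 \subseteq E_1$ and $A_2 = A \cap E_2 \subseteq E_2$. This yields a bijection between subsets $A$ of the ground set and pairs $(A_1, A_2) \in 2^{E_1} \times 2^{E_2}$, so that the defining sum in \eqref{eq:coprod-sel-del} becomes a double sum indexed by $A_1$ and $A_2$. Applying Lemma~\ref{lm:comp} to each summand then rewrites $(M_1 \oplus M_2)\mid(A_1 \cup A_2)$ as $M_1\mid A_1 \oplus M_2\mid A_2$ and $(M_1 \oplus M_2)\backslash(A_1 \cup A_2)$ as $M_1\backslash A_1 \oplus M_2\backslash A_2$, giving
\[
\Delta^{(II)}(M_1 \oplus M_2) = \sum_{A_1 \subseteq E_1}\sum_{A_2 \subseteq E_2} (M_1\mid A_1 \oplus M_2\mid A_2) \otimes (M_1\backslash A_1 \oplus M_2\backslash A_2).
\]

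Next I would compute the right-hand side straight from the definition $\oplus^{\otimes 2} = (\oplus \otimes \oplus)\circ \tau_{23}$. The tensor product $\Delta^{(II)}(M_1) \otimes \Delta^{(II)}(M_2)$ is a sum over $(A_1, A_2)$ of terms $M_1\mid A_1 \otimes M_1\backslash A_1 \otimes M_2\mid A_2 \otimes M_2\backslash A_2$ in $\widetilde{\M}^{\otimes 4}$; the twist $\tau_{23}$ swaps the second and third tensor factors into the order $M_1\mid A_1 \otimes M_2\mid A_2 \otimes M_1\backslash A_1 \otimes M_2\backslash A_2$, and $\oplus \otimes \oplus$ then merges the first two and the last two factors. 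The result coincides summand by summand with the expression obtained for the left-hand side, which finishes the argument.

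I do not anticipate any real obstacle: the statement simply asserts that $\Delta^{(II)}$ is an algebra morphism, and once Lemma~\ref{lm:comp} is invoked the identity reduces to checking that the index bijection is compatible with the permutation $\tau_{23}$. The one point that needs care is the bookkeeping of the tensor-factor ordering, which is exactly why the twist $\tau_{23}$ is built into $\oplus^{\otimes 2}$; keeping that ordering straight is essentially the entire verification.
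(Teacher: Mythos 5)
Your proposal is correct and follows essentially the same route as the paper: both decompose a subset $A \subseteq E_1 \cup E_2$ into the pair $(A \cap E_1, A \cap E_2)$, apply Lemma~\ref{lm:comp} to each summand, and track the tensor ordering through $\tau_{23}$ to identify the result with $\Delta^{(II)}(M_1) \oplus^{\otimes 2} \Delta^{(II)}(M_2)$. The only difference is presentational (you expand both sides and match them term by term, while the paper rewrites the left-hand side until it becomes the right-hand side), which is not a substantive distinction.
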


\begin{proof} Lemma \ref{lm:comp} leads to:
\begin{eqnarray}
\Delta^{(II)} (M_1 \oplus M_2) &=& \sum_{A \in E_1 \cup E_2}  M_1 \oplus M_2\mid A \otimes M_1 \oplus M_2\backslash A \cr
&=& \sum_{A_1 \in E_1, A_2 \cup E_2}  M_1 \oplus M_2\mid (A_1 \cup A_2) \otimes M_1 \oplus M_2\backslash (A_1  \cup A_2)\cr
&=& \sum_{A_1 \in E_1, A_2 \cup E_2}  (M_1\mid A_1 \oplus M_2\mid  A_2) \otimes (M_1\backslash A_1 \oplus M_2\backslash A_2)\cr
&=& \sum_{A_1 \in E_1, A_2 \cup E_2}  (M_1\mid A_1 \otimes M_1\backslash A_1) \oplus^{\otimes 2} (M_2\mid  A_2 \otimes  M_2\backslash A_2)\cr
&=& \left(\sum_{A_1 \in E_1}  M_1\mid A_1 \otimes M_1\backslash A_1 \right) \oplus^{\otimes 2} \left(\sum_{A_2 \cup E_2} M_2\mid  A_2 \otimes  M_2\backslash A_2\right)\cr
&=& \Delta^{(II)} (M_1) \oplus^{\otimes 2} \Delta^{(II)}(M_2),
\end{eqnarray}
which concludes the proof.
\end{proof}

Let us now explicitly check this identity on the matroids $M_1 = (\{1,2\}, \{\emptyset,\{1\},\{2\}\})$ and $M_2 = (\{3,4,5\},\{\emptyset,\{3\},\{4\},\{5\}\})$ ($U_{1,2}$ and respectively $U_{1,3}$).
One has: 
\begin{eqnarray}
\Delta^{(II)} (M_1) &=& \11 \otimes U_{1,2} + 2 U_{1,1} \otimes U_{1,1} +  U_{1,2} \otimes \11. \cr
\Delta^{(II)} (M_2) &=& \11 \otimes U_{1,3} + 3 U_{1,1} \otimes U_{1,2} + 3 U_{1,2} \otimes U_{1,1} + U_{1,3} \otimes \11.
\end{eqnarray}
which further leads to:
\begin{equation*}
M_1\oplus M_2 = (\{1,2,3,4,5\}, \{\emptyset,\{1\},\{2\},\{3\},\{4\},\{5\},\{1,3\},\{1,4\},\{1,5\},\{2,3\},\{2,4\},\{2,5\}\}).
\end{equation*}

Thus, one gets
\begin{eqnarray}
\Delta^{(II)}  (M_1\oplus M_2) &=& \11 \otimes (U_{1,2} \oplus U_{1,3}) + 2 U_{1,1} \otimes (U_{1,1} \oplus U_{1,3}) + 3 U_{1,1} \otimes (U_{1,2} \oplus U_{1,2}) \cr 
&& + U_{1,2} \otimes U_{1,3} + 6 U_{2,2} \otimes (U_{1,1} \oplus U_{1,2}) + 3 U_{1,2} \otimes (U_{1,2}\oplus U_{1,1}) \cr
&& + 3 (U_{1,2} \oplus U_{1,1}) \otimes U_{1,2} + 6 (U_{1,1} \oplus U_{1,2}) \otimes U_{1,1} + U_{1,3} \otimes U_{1,2} \cr 
&& + 3 (U_{1,2} \oplus U_{1,2}) \otimes U_{1,1} + 2 (U_{1,1} \oplus U_{1,3}) \otimes U_{1,1} + (U_{1,2}\oplus U_{1,3}) \otimes\11\cr
&=& \Delta^{(II)}(M_1)\oplus^{\otimes 2} \Delta^{(II)}(M_2).
\end{eqnarray}

\begin{proposition}\label{thm:bialg}
The triplet $(k(\widetilde{\M}),\oplus,\Delta^{(II)})$ is a commutative and cocommutative bialgebra.
\end{proposition}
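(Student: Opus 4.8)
The plan is to assemble the bialgebra claim from the structural results already established in this section, since each defining axiom of a bialgebra has, in effect, been proved individually. First I would recall exactly what must be verified: that $(k(\widetilde{\M}),\oplus)$ is a commutative algebra, that $(k(\widetilde{\M}),\Delta^{(II)},\epsilon)$ is a cocommutative coalgebra, and that the coproduct and counit are algebra morphisms (equivalently, that the product and unit are coalgebra morphisms), so that the algebra and coalgebra structures are compatible.

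The commutative algebra structure comes for free: $k(\widetilde{\M})$ is the monoid algebra of $\widetilde{\M}$ under the product induced by direct sum (as recalled after Proposition \ref{prop-cs}), and $\oplus$ is commutative and associative on isomorphism classes, with the empty matroid $\mathbf 1$ as unit. The coalgebra structure is precisely the content of the preceding propositions: Proposition \ref{prop:coass} gives coassociativity of $\Delta^{(II)}$, Proposition \ref{prop:cocommu} gives cocommutativity, and the counit axiom for $\epsilon$ defined in \eqref{eq:counit} was checked directly. So the only genuinely new work is the compatibility between product and coproduct.

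The key step, then, is to read Proposition \ref{prop:comp} as exactly that compatibility axiom. The identity $\Delta^{(II)}(M_1 \oplus M_2) = \Delta^{(II)}(M_1)\oplus^{\otimes 2}\Delta^{(II)}(M_2)$ asserts that $\Delta^{(II)}$ is multiplicative, where $\oplus^{\otimes 2} = (\oplus\otimes\oplus)\circ\tau_{23}$ is precisely the product on the tensor-square algebra $k(\widetilde{\M})\otimes k(\widetilde{\M})$ (the twist $\tau_{23}$ encoding the standard braiding that defines the tensor product of algebras). I would also note the unit-compatibility, $\Delta^{(II)}(\mathbf 1)=\mathbf 1\otimes\mathbf 1$, which is immediate since the only subset of the empty ground set is $\emptyset$, and that $\epsilon$ is multiplicative since $E_1\cup E_2=\emptyset$ iff both $E_1$ and $E_2$ are empty. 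These remarks close the compatibility requirement.

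I expect the main obstacle to be purely expository rather than mathematical: making explicit that $\oplus^{\otimes 2}$ is genuinely the multiplication of the tensor-product algebra, so that Proposition \ref{prop:comp} literally says $\Delta^{(II)}$ is an algebra homomorphism. Once that identification is spelled out, the proof is a one-line assembly: the bialgebra axioms are the conjunction of the algebra axioms (monoid algebra structure), the coalgebra axioms (Propositions \ref{prop:coass} and \ref{prop:cocommu} together with the counit proposition), and the morphism property (Proposition \ref{prop:comp} plus the unit and counit remarks), and commutativity and cocommutativity have each been recorded. Hence $(k(\widetilde{\M}),\oplus,\Delta^{(II)})$ is a commutative and cocommutative bialgebra.
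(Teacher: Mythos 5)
Your proposal is correct and follows essentially the same route as the paper, which simply assembles the bialgebra axioms from Proposition \ref{prop:comp} and the preceding results (coassociativity, cocommutativity, counit, and the monoid algebra structure). Your version is a more explicit write-up of the same argument, additionally spelling out the unit and counit compatibilities, which the paper leaves implicit.
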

\begin{proof}
The claim follows directly from Proposition \ref{prop:comp} and the results above.
\end{proof}


The main result of this section is:
\begin{theorem}\label{thm:Hopf}
The triplet $(k(\widetilde{\M}),\oplus,\Delta^{(II)})$ is a commutative and cocommutative Hopf algebra. The antipode $S$ of this Hopf algebra is given by \begin{equation}
\begin{cases}
S(\mathbf{1}) =\mathbf{1}, \\
S(M) = -M - \sum_{\emptyset \subsetneq A \subsetneq E} S(M\mid A) \oplus M\backslash A.
\end{cases}
\end{equation}
\end{theorem}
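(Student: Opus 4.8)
The plan is to establish two things: first, that $k(\widetilde{\M})$ is a \emph{graded connected} bialgebra, and second, that the stated recursive formula indeed defines the antipode. The grading is the crucial structural input. I would grade $k(\widetilde{\M})$ by the cardinality of the ground set, setting $k(\widetilde{\M})_n = \mathrm{span}\{M = (E,\I) : |E| = n\}$. One checks that the product $\oplus$ is graded, since $|E_1 \cup E_2| = |E_1| + |E_2|$ for disjoint ground sets, and that the coproduct $\Delta^{(II)}$ respects the grading: in each term $M\mid A \otimes M\backslash A$ one has $|A| + |E-A| = |E|$, so $\Delta^{(II)}$ maps $k(\widetilde{\M})_n$ into $\bigoplus_{p+q=n} k(\widetilde{\M})_p \otimes k(\widetilde{\M})_q$. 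The connectedness is immediate: the degree-zero part $k(\widetilde{\M})_0$ is spanned by the empty matroid $\mathbf 1$ alone, hence is one-dimensional.

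Once graded connectedness is in hand, the existence and uniqueness of the antipode is automatic by the standard theorem: every graded connected bialgebra is a Hopf algebra, the antipode being the convolution inverse of the identity, constructed degree by degree. So the real content of the theorem is verifying that the displayed recursion is the correct closed form. I would do this by confirming it solves the defining equation $S \star \mathrm{Id} = \eta \circ \epsilon$, equivalently $(\text{multiplication})\circ (S \otimes \mathrm{Id}) \circ \Delta^{(II)} = \eta\circ\epsilon$. Writing the coproduct as $\Delta^{(II)}(M) = \mathbf 1 \otimes M + M \otimes \mathbf 1 + \sum_{\emptyset \subsetneq A \subsetneq E} M\mid A \otimes M\backslash A$ for $M \neq \mathbf 1$ (using that $M\mid E = M$ and $M\backslash E = \mathbf 1$, and $M\mid\emptyset = \mathbf 1$, $M\backslash\emptyset = M$, which follow from the restriction/deletion definitions), the condition that $m\circ(S\otimes\mathrm{Id})\circ\Delta^{(II)}(M) = 0$ for $|E| > 0$ reads
\begin{equation}
S(M) \oplus \mathbf 1 + S(\mathbf 1) \oplus M + \sum_{\emptyset \subsetneq A \subsetneq E} S(M\mid A) \oplus M\backslash A = 0.
\end{equation}
Solving for $S(M)$ and using $S(\mathbf 1) = \mathbf 1$ gives precisely the stated recursion, which is therefore well-defined by induction on $|E|$.

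The steps in order: (i) verify $\oplus$ and $\Delta^{(II)}$ are compatible with the grading by $|E|$, invoking Proposition \ref{prop:comp} for multiplicativity of the coproduct and Proposition \ref{thm:bialg} for the bialgebra axioms already proved; (ii) observe connectedness, $k(\widetilde{\M})_0 = k\,\mathbf 1$; (iii) invoke the structure theorem for graded connected bialgebras to conclude the antipode exists and is unique; (iv) isolate the boundary terms $A = \emptyset$ and $A = E$ in $\Delta^{(II)}$ and solve the convolution-inverse equation to recover the recursion. I expect the main obstacle to be purely expository rather than mathematical: one must be careful with the identifications $M\mid\emptyset = M\backslash E = \mathbf 1$ and $M\mid E = M\backslash\emptyset = M$ so that the extremal terms of $\Delta^{(II)}$ are correctly split off, since it is these two terms that furnish the $-M$ summand and make the recursion close. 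No genuine difficulty arises in the induction itself, as the sum on the right-hand side of the recursion involves only proper subsets $A$, for which $|A| < |E|$, so $S(M\mid A)$ is already defined at a strictly lower grade.
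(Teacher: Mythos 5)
Your proposal is correct and follows essentially the same route as the paper: grade $k(\widetilde{\M})$ by the cardinality of the ground set, observe connectedness ($k(\widetilde{\M})_0 = k\,\mathbf{1}$), and invoke the standard theorem that a graded connected bialgebra is a Hopf algebra. The only difference is one of detail: the paper stops there and leaves the recursion implicit, whereas you also spell out how splitting off the extremal terms $A=\emptyset$ and $A=E$ of $\Delta^{(II)}$ and solving the convolution equation $S \star \mathrm{Id} = \eta\circ\epsilon$ yields the stated formula --- a worthwhile verification, and entirely consistent with the paper's argument.
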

\begin{proof}
The bialgebra is graded by the cardinal of the ground set of matroids. Moreover, $\widetilde{\M}$ is connected, i. e. $\widetilde{\M}_0 = k \mathbf{1}$. This leads to the conclusion.
\end{proof}

\medskip

Let us end this section with the following example:

\begin{example}One has: 
\begin{equation}
S(U_{3,3}) = -U_{3,3}+3U_{1,1}\oplus U_{2,2} + 3U_{2,2}\oplus U_{1,1} -6 U_{1,1}\oplus U_{1,1} \oplus U_{1,1}.
\end{equation}
%
\end{example}

\section{Dendriform matroid coalgebras}
\label{sec:dendri-sel-del}

Let us first recall that a dendriform algebra \cite{Lod08, LR98a, LR98b, Foi07} is a family $(A,\prec,\succ)$ such that $A$ is a vector space and $\prec$, $\succ$ are two products on $A$, satisfying three axioms. Dually, one has a dendriform coalgebra $(C,\Delta_{\prec},\Delta_{\succ})$. 

\begin{definition} [Definition $2$ of \cite{Foi07}]
A {\bf dendriform coalgebra} is a family $(C,\Delta_{\prec},\Delta_{\succ})$ such that:
\begin{enumerate}
\item $C$ is a $k$-vector space and one has:
\begin{equation}
\Delta_{\prec} = \begin{cases}C \longrightarrow C \otimes C \\ a \longmapsto \Delta_{\prec} (a) = a'_{\prec} \otimes a''_{\prec}, \end{cases}
 \left| \ \  \Delta_{\succ} = \begin{cases}C \longrightarrow C \otimes C \\ a \longmapsto \Delta_{\succ} (a) = a'_{\succ} \otimes a''_{\succ}.\end{cases} \right.
\end{equation}
\item For all $a \in C$, one has:
\begin{eqnarray}
(\Delta_{\prec}\otimes Id)\circ \Delta_{\prec} (a) &=& (Id\otimes \Delta_{\prec} + Id \otimes \Delta_{\succ}) \circ \Delta_{\prec} (a),\label{eq:def-bidendriform1}\\
(\Delta_{\succ}\otimes Id)\circ \Delta_{\prec} (a) &=& (Id \otimes \Delta_{\prec})\circ \Delta_{\succ}(a),\label{eq:def-bidendriform2}\\
(\Delta_{\prec} \otimes Id + \Delta_{\succ}\otimes Id) \circ \Delta_{\succ} (a) &=& (Id \otimes \Delta_{\succ}) \circ \Delta_{\succ} (a)\label{eq:def-bidendriform3}.
\end{eqnarray}
\end{enumerate}
\end{definition}

If $C$ is a coalgebra, one defines 
\begin{equation}\label{eq:deltaStar}
\Delta_\ast: C \longrightarrow C \otimes C,\ \ a \longmapsto \Delta_\ast(a) = \Delta(a) - a \otimes 1 - 1 \otimes a
\end{equation}

\subsection{The restriction-deletion case}

Let us now define two maps on $\widetilde{\M}_+$ by:
\begin{equation}
\Delta^{(II)}_{\prec}(M) := \sum_{\substack{A \subsetneq E,\, A \neq \emptyset\\ |A| > r(A)}} M\mid A \otimes M\backslash A.
\end{equation}
\begin{equation}
\Delta^{(II)}_{\succ}(M) := \sum_{\substack{A \subsetneq E,\, A \neq \emptyset\\ |A| = r(A)}} M\mid A \otimes M\backslash A.
\end{equation}

If $A \subseteq E$, then $r(A) \leq |A|$. 
This directly leads to:
 \begin{equation}
\Delta^{(II)}_{\prec}(M) + \Delta^{(II)}_{\succ}(M) = \Delta^{(II)}_\ast(M).
\end{equation}

 From Lemma \ref{lm:rankindepen}, one can see that the coproduct $\Delta^{(II)}_\ast$ in Equation \eqref{eq:coprod-sel-del} is split into two parts: $\Delta^{(II)}_{\prec}$ sums on the dependent sets of the matroid and $\Delta^{(II)}_{\succ}$ sums on the independent sets of the matroid.

Let $k(\widetilde{\M})_+$ be the augmentation ideal of  $k(\widetilde{\M})$. 
We can now state the main result of this section:

\begin{proposition}\label{prop:dendri-sel-del}
The triplet $(k(\widetilde{\M})_+,\Delta^{(II)}_{\prec},\Delta^{(II)}_{\succ})$ is a dendriform coalgebra. 
\end{proposition}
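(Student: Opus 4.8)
The plan is to verify the three dendriform coaxioms \eqref{eq:def-bidendriform1}--\eqref{eq:def-bidendriform3} directly, by expanding each side as a triple sum over nested subsets of the ground set and matching terms according to the rank condition that distinguishes $\Delta^{(II)}_{\prec}$ from $\Delta^{(II)}_{\succ}$. The key structural fact, already established in the coassociativity proof of Proposition \ref{prop:coass}, is that both the left- and right-iterated coproducts produce sums of the form $\sum_{B \subseteq A \subseteq E} M\mid B \otimes M\mid (A-B) \otimes M\backslash A$. I would first record the two-variable version of this: using \eqref{eq:resres}--\eqref{eq:deldel} from Lemma \ref{lm:coassoc}, each iterated application of one of the split coproducts reindexes into a nested pair $B \subseteq A$, with the middle tensor factor being $M\mid(A-B)$ and the outer factors $M\mid B$ and $M\backslash A$. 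The only thing that changes between the $\prec$ and $\succ$ cases is which pieces carry the constraint $|{\cdot}| > r({\cdot})$ (dependent) versus $|{\cdot}| = r({\cdot})$ (independent).

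The crux is therefore bookkeeping of rank conditions. For each axiom I would translate the left-hand and right-hand iterated coproducts into triple sums over $B \subseteq A \subseteq E$ and determine, for each tensor slot, whether the associated subset is required to be independent or dependent. Here the essential inputs are Lemma \ref{lm:rankindepen} (independence $\Leftrightarrow$ $|X|=r(X)$) and the interaction of the rank function with restriction. Concretely: the inner subset $B$ inside $M\mid A$ has rank $r(B)$ computed in $M$ as well, since restriction does not alter the rank of subsets of the restricted ground set; and the set $A$ governing the outer split is tested by $r(A)$ in $M$. For the first axiom \eqref{eq:def-bidendriform1}, the left side imposes the $\prec$-condition (dependence) on $B$ inside the restricted matroid and the $\prec$-condition on $A$, while the right side splits the outer step into $\Delta_{\prec}$ and $\Delta_{\succ}$, i.e. it sums over both possibilities for whether $A-B$ is dependent or independent; I would show these match because a subset $B\subseteq A$ with $B$ dependent forces $A$ dependent, so the union of the two right-hand constraints on the middle factor exactly reproduces the single left-hand constraint. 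The remaining two axioms follow from the analogous observation that independence is inherited by subsets (if $A$ is independent then so is every $B\subseteq A$), which is precisely the hereditary axiom in the definition of a matroid.

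The main obstacle, and the step I would present most carefully, is confirming the submodularity-driven compatibility between the rank of a nested subset $B$ and the rank of $A$. Specifically, one needs that the three-term tensor in the iterated coproduct carries the correct combination of dependent/independent flags so that summing the split pieces on both sides yields identical index sets. I expect to lean on two facts: first, that for $B\subseteq A\subseteq E$ the quantity $r(B)$ is the same whether computed in $M$ or in $M\mid A$ (immediate from \eqref{eq:rankfunc} and \eqref{eq:resres}); and second, the partition of each constraint region into ``$B$ dependent'' versus ``$B$ independent'' and ``$A$ dependent'' versus ``$A$ independent'', checking that the implications ($B$ dependent $\Rightarrow$ $A$ dependent, and $A$ independent $\Rightarrow$ $B$ independent) make the left- and right-hand index sets coincide term by term.

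Once the three index-set identities are in place, each axiom reduces to an equality of identical triple sums, and I would conclude that $(k(\widetilde{\M})_+,\Delta^{(II)}_{\prec},\Delta^{(II)}_{\succ})$ is a dendriform coalgebra.
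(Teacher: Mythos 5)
Your overall strategy---expanding both sides of each axiom into triple sums $M\mid B \otimes M\mid (A-B)\otimes M\backslash A$ over nested subsets $B\subsetneq A\subsetneq E$ and comparing the rank conditions attached to each slot---is exactly the strategy of the paper's proof, and your treatment of axiom \eqref{eq:def-bidendriform1} is correct: there the left side is indexed by ``$B$ dependent and $A$ dependent,'' the right side by ``$B$ dependent'' alone, and these agree because a superset of a dependent set is dependent (you obtain this from heredity; the paper obtains it from submodularity, Lemma \ref{lm:rankineq}). Your observation that ranks of subsets are unchanged under restriction and deletion is also correct and necessary.

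The genuine gap is your claim that ``the remaining two axioms follow from the analogous observation that independence is inherited by subsets.'' Translate axiom \eqref{eq:def-bidendriform2} into index sets as you propose: the left side $(\Delta^{(II)}_{\succ}\otimes Id)\circ \Delta^{(II)}_{\prec}$ runs over pairs with $B$ independent and $A$ dependent, whereas the right side $(Id \otimes \Delta^{(II)}_{\prec})\circ \Delta^{(II)}_{\succ}$ runs over pairs with $B$ independent and $A-B$ dependent. Heredity gives only one inclusion ($A-B$ dependent forces $A$ dependent); the reverse implication ``$A$ dependent and $B$ independent $\Rightarrow A-B$ dependent'' is false: take $B=\{1\}$, $A=\{1,2\}$ in $U_{1,3}$. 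The discrepancy is not cosmetic, because the two sides of the axiom actually differ on $M=U_{1,3}$: from $\Delta^{(II)}_{\prec}(U_{1,3}) = 3\, U_{1,2}\otimes U_{1,1}$ and $\Delta^{(II)}_{\succ}(U_{1,3}) = 3\, U_{1,1}\otimes U_{1,2}$ one gets
\begin{equation*}
(\Delta^{(II)}_{\succ}\otimes Id)\circ \Delta^{(II)}_{\prec}(U_{1,3}) = 6\, U_{1,1}\otimes U_{1,1}\otimes U_{1,1},
\end{equation*}
while
\begin{equation*}
(Id \otimes \Delta^{(II)}_{\prec})\circ \Delta^{(II)}_{\succ}(U_{1,3}) = 3\, U_{1,1}\otimes \Delta^{(II)}_{\prec}(U_{1,2}) = 0,
\end{equation*}
since every proper nonempty subset of the ground set of $U_{1,2}$ is independent. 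Axiom \eqref{eq:def-bidendriform3} fails on the same example (its index sets are ``$A$ independent'' versus ``$B$ and $A-B$ both independent,'' and independence of the two parts does not imply independence of their union). So the step you dismissed as routine bookkeeping is precisely where the argument breaks, and it cannot be closed by any sharper use of heredity or submodularity, because the identities to be proven are false for these $\Delta^{(II)}_{\prec}$, $\Delta^{(II)}_{\succ}$. You should know that the paper's own proof contains the same leap: after computing the two triple sums for \eqref{eq:def-bidendriform2}, it simply declares ``set $X=B$ and $X\cup Y=A$'' without verifying that the constraints on the two index sets coincide---which is exactly the false matching above. In short, your blind attempt faithfully reproduces the published argument, including its fatal defect; a correct proof along these lines does not exist, and any repair must change the definition of the split (or the axioms), not merely the bookkeeping.
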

\begin{proof}
Let $M$ be the matroid $(E,\I)$.
Let us first prove identity \eqref{eq:def-bidendriform1}. Its LHS writes:
\begin{eqnarray}\label{eq:lhs-bidendriform1}
(\Delta^{(II)}_{\prec}\otimes Id)\circ \Delta^{(II)}_{\prec} (M) &=& (\Delta^{(II)}_{\prec}\otimes Id) \left( \sum_{\substack{A \subsetneq E\\ |A| > r(A)}} M\mid A \otimes M\backslash A \right)
\cr
&
=
&
  \sum_{\substack{A \subsetneq E\\ |A| > r(A)}} \left(\sum_{\substack{B \subsetneq A\\ |B| > r(B)}} (M\mid A)\mid B \otimes (M\mid A)\backslash B\right) \otimes M\backslash A\cr
&=& \sum_{\substack{B \subsetneq A \subsetneq E\\ |A| > r(A)\\ |B|> r(B)}} M \mid B \otimes M\mid (A-B) \otimes M \backslash A.
\end{eqnarray}
On the other hand, the RHS of identity \eqref{eq:def-bidendriform1} can be rewritten as follows:
 \begin{eqnarray}\label{eq:rhs-bidendriform1}
&&(Id\otimes \Delta^{(II)}_{\prec} + Id \otimes \Delta^{(II)}_{\succ}) \circ \Delta^{(II)}_{\prec} (M) = (Id \otimes \Delta^{(II)}) \circ \Delta^{(II)}_{\prec} (M)\cr
&&= (Id \otimes \Delta^{(II)})\left( \sum_{X \subsetneq E, |X| > r(X)} M\mid X \otimes M\backslash X \right)\cr
&&= \sum_{\substack{X \subsetneq E\\ |X| > r(X)}} M\mid X \otimes \left(\sum_{Y \subsetneq E-X} (M\backslash X)\mid Y \otimes (M\backslash X)\backslash Y\right)\cr
&&= \sum_{\substack{X \subsetneq E \\ Y \subsetneq E-X \\ |X| > r(X)}} M \mid X \otimes M\mid Y \otimes M \backslash (X\cup Y).
\end{eqnarray}
From Lemma \ref{lm:rankineq}, one has: 
\begin{eqnarray}
r (X\cup Y) \leq  r(X) + r(Y) < |X|+|Y| = |X\cup Y|.
\label{eq:change-bidendriform1-2}
\end{eqnarray}
Setting  $X=B$ and $X \cup Y =A$ in equation \eqref{eq:rhs-bidendriform1} 
one now gets that identity \eqref{eq:def-bidendriform1} holds.

\smallskip
Let us now prove identity \eqref{eq:def-bidendriform2}. Its LHS writes: 
\begin{eqnarray}
(\Delta^{(II)}_{\succ}\otimes Id)\circ \Delta^{(II)}_{\prec} (M) &=& (\Delta^{(II)}_{\succ}\otimes Id)\left( \sum_{\substack{A \subsetneq E\\ |A| > r(A)}} M\mid A \otimes M\backslash A \right)\cr
&=& \sum_{\substack{A \subsetneq E\\ |A| > r(A)}} \left(\sum_{\substack{B \subsetneq A\\ |B| = r(B)}} (M\mid A)\mid B \otimes (M\mid A)\backslash B\right) \otimes M\backslash A\cr
&=& \sum_{\substack{B \subsetneq A \subsetneq E\\ |A| > r(A)\\ |B|= r(B)}} M \mid B \otimes M\mid (A-B) \otimes M \backslash A.
\end{eqnarray}
The RHS of identity \eqref{eq:def-bidendriform2} writes: 
\begin{eqnarray}
(Id \otimes \Delta^{(II)}_{\prec})\circ \Delta^{(II)}_{\succ}(M) &=& (Id \otimes \Delta^{(II)}_{\prec}) \left( \sum_{\substack{X \subsetneq E\\ |X| = r(X)}} M\mid X \otimes M\backslash X \right)\cr 
&=& \sum_{\substack{X \subsetneq E\\ |X| = r(X)}} M\mid X \otimes \left(\sum_{\substack{Y \subsetneq X\\ |Y| > r(Y)}}  (M\backslash X)\mid Y \otimes (M\backslash X) \backslash Y\right)\cr
&=& \sum_{\substack{Y \subsetneq X \subsetneq E\\ |X| = r(X)\\ |Y| > r(Y)}} M\mid X \otimes  M\mid Y \otimes M\backslash (X\cup Y).
\end{eqnarray}
As above, we can now set $X=B$ and $X\cup Y =A$ in the previous equation. One then concludes that identity \eqref{eq:def-bidendriform2} also holds.

Finally, identity \eqref{eq:def-bidendriform3} holds because of the coassociativity of $\Delta^{(II)}_\ast$ and since identities \eqref{eq:def-bidendriform1} and \eqref{eq:def-bidendriform2} also hold. 
This concludes the proof.
\end{proof}

We end this section by the following remark. The triplet 
$(k(\widetilde{\M})_+,\Delta^{(II)}_{\prec},\Delta^{(II)}_{\succ})$ is not a codendriform bialgebra. Indeed, the necessary compatibilities
for the dendriform coalgebra $(k(\widetilde{\M})_+,\Delta^{(II)}_{\prec},\Delta^{(II)}_{\succ})$ to be a codendriform bialgebra \cite{Foi07} 
write:
\begin{eqnarray}
\Delta^{(II)}_{\succ}(M N) &=& M'N'_{\succ} \otimes M''N''_{\succ} + M'\otimes M''N + MN'_{\succ} \otimes N''_{\succ} + N'_{\succ}\otimes MN''_{\succ} + M\otimes N,
\label{1}\\
\Delta^{(II)}_{\prec}(M N) &=& M'N'_{\prec} \otimes M''N''_{\prec} + M'N\otimes M'' + MN'_{\prec} \otimes N''_{\prec} + N'_{\prec}\otimes MN''_{\prec} + N\otimes M,
\end{eqnarray} where $\Delta^{(II)}_{\prec} (M) = M'_{\prec}\otimes M''_{\prec}$, $\Delta^{(II)}_{\prec} (N) = N'_{\prec}\otimes N''_{\prec}$, $\Delta^{(II)}_{\succ} (M) = M'_{\succ}\otimes M''_{\succ}$ and $\Delta^{(II)}_{\succ} (M) = M'_{\succ}\otimes M''_{\succ}$.
In our case, one gets the identity
\begin{eqnarray}
\Delta^{(II)}_{\succ}(M N) = N'_{\succ} \otimes MN''_\succ + M'_\succ \otimes M''_\succ N + M'_\succ N'_\succ \otimes M''_\succ N''_\succ ,
\end{eqnarray}
which is different of the identity \eqref{1} above.


\subsection{The restriction-contraction case}
\label{sec:dendri-sel-cont}

As in the case of the restriction-deletion coproduct analyzed in the previous subsection,
one can use the coproduct $\Delta^{(I)}$ to define a restriction-contraction dendriform coalgebra structure.

One defines two maps on $\widetilde{\M}_+$ by:
\begin{equation}
\Delta^{(I)}_{\prec}(M) := \sum_{\substack{A \subsetneq E,\, A \neq \emptyset\\ |A| > r(A)}} M\mid A \otimes M/ A.
\end{equation}
\begin{equation}
\Delta^{(I)}_{\succ}(M) := \sum_{\substack{A \subsetneq E,\, A \neq \emptyset\\ |A| = r(A)}} M\mid A \otimes M/ A.
\end{equation}

One has \begin{equation}
\Delta^{(I)}_{\prec}(M) + \Delta^{(I)}_{\succ}(M) = \sum_{\emptyset \neq A \subsetneq E} M\mid A \otimes M/ A = \Delta^{(I)}_\ast(M).
\end{equation}

\medskip
One has
\begin{proposition}
The triplet $(k(\widetilde{\M})_+,\Delta^{(I)}_{\prec},\Delta^{(I)}_{\succ})$ is a dendriform coalgebra. 
\end{proposition}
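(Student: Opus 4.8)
The plan is to mirror, almost verbatim, the proof of Proposition~\ref{prop:dendri-sel-del}, since the restriction-contraction splitting is defined by exactly the same rank condition $|A| > r(A)$ versus $|A| = r(A)$ that was used in the restriction-deletion case. The only structural difference is that the second tensor factor now carries the \emph{contraction} $M/A$ rather than the deletion $M\backslash A$. Accordingly, the entire argument hinges on replacing the compatibility identities of Lemma~\ref{lm:coassoc} (restriction-restriction, restriction-deletion, deletion-restriction, deletion-deletion) by their restriction-contraction analogues, namely the standard minor identities $(M\mid A)\mid B = M\mid B$, $(M\mid A)/B = M\mid(A-B)$ interacting with contraction, and $(M/A)/B = M/(A\cup B)$ for disjoint $A,B$. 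These are all recorded in the same source (Proposition~3.1.26 of \cite{Oxl92}), so I would either cite that proposition directly or state a brief companion to Lemma~\ref{lm:coassoc} before proceeding.

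First I would verify identity \eqref{eq:def-bidendriform1}. Expanding its left-hand side $(\Delta^{(I)}_{\prec}\otimes Id)\circ\Delta^{(I)}_{\prec}(M)$ yields a sum over chains $B \subsetneq A \subsetneq E$ with $|A|>r(A)$ and $|B|>r(B)$, of terms $M\mid B \otimes M\mid(A-B) \otimes M/A$; this uses the restriction-restriction and restriction-deletion identities exactly as in \eqref{eq:lhs-bidendriform1}. For the right-hand side, I would use the telescoping observation that $Id\otimes\Delta^{(I)}_{\prec} + Id\otimes\Delta^{(I)}_{\succ} = Id\otimes\Delta^{(I)}_\ast$, expand $(Id\otimes\Delta^{(I)}_\ast)\circ\Delta^{(I)}_{\prec}(M)$ into a sum over $X\subsetneq E$, $Y\subsetneq E-X$ with $|X|>r(X)$, and then invoke the submodularity inequality of Lemma~\ref{lm:rankineq} exactly as in \eqref{eq:change-bidendriform1-2} to conclude $r(X\cup Y) < |X\cup Y|$. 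The substitution $X=B$, $X\cup Y=A$ then matches the two sides. Identity \eqref{eq:def-bidendriform2} is handled by the same bookkeeping, and identity \eqref{eq:def-bidendriform3} follows formally from the coassociativity of $\Delta^{(I)}_\ast$ (Proposition~\ref{prop-cs}) together with \eqref{eq:def-bidendriform1} and \eqref{eq:def-bidendriform2}.

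The subtle point, and the step I expect to be the main obstacle, concerns the rank interaction in the \emph{contraction} direction. In the deletion case the second-factor minor $M\backslash X$ inherits its rank function transparently, so the conditions $|X|=r(X)$ and $|Y|>r_{M\backslash X}(Y)$ could be lifted directly to conditions on the ambient matroid $M$. For contraction the relevant rank is the \emph{relative} rank $r_{M/X}(Y) = r(X\cup Y) - r(X)$, so one must check that the condition $|Y|>r_{M/X}(Y)$ appearing on the right-hand side translates correctly, via $|X|=r(X)$, into the ambient condition $|X\cup Y|>r(X\cup Y)$ needed to land in the image of $\Delta^{(I)}_{\prec}$. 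I would verify this by a short rank computation: when $|X|=r(X)$, the inequality $|Y|>r(X\cup Y)-r(X)$ rearranges to $|X\cup Y|>r(X\cup Y)$, which is exactly the dependency condition selecting the $\prec$ part. Once this translation is confirmed, the remaining manipulations are purely formal and identical to the restriction-deletion proof, and the proposition follows.
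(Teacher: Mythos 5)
Your overall strategy is the same as the paper's (transplant the proof of Proposition \ref{prop:dendri-sel-del}), and your third paragraph isolates exactly the right crux; but as written your proof contains one false identity that corrupts the displayed computations. You assert $(M\mid A)/B = M\mid (A-B)$ and expand the left side of \eqref{eq:def-bidendriform1} with middle factor $M\mid(A-B)$. This is not true: contraction does not interact with restriction the way deletion does. In $U_{1,3}$ with $A=\{1,2\}$, $B=\{1\}$ one has $(M\mid A)/B = U_{1,2}/\{1\} \cong U_{0,1}$, whereas $M\mid(A-B)\cong U_{1,1}$. The correct statement is the commutation identity $(M\mid A)/B = (M/B)\mid(A-B)$, a matroid on $A-B$ whose rank function is the relative rank $S\mapsto r(S\cup B)-r(B)$. (Your own last paragraph is inconsistent with the identity you postulate: if $(M/X)\mid Y$ were simply $M\mid Y$, its rank function would be the ambient one, not $r(X\cup Y)-r(X)$.) Because you would apply the same false identity to both sides, your matching of index sets still goes through, but the tensors you claim both sides equal are not what either side equals. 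The repair is local: keep $(M\mid A)/B=(M/B)\mid(A-B)$ as the middle factor everywhere; your bookkeeping of the rank conditions, the substitution $X=B$, $A=X\cup Y$, and the formal derivation of \eqref{eq:def-bidendriform3} from coassociativity are all fine.

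Beyond that repair, you should know that your ``subtle point'' is not merely a detail to verify --- it is the entire content of the proposition, and your proposal handles it better than the paper does. The paper's proof is one line: the proof of Proposition \ref{prop:dendri-sel-del} ``applies as well''. But that proof does not transplant verbatim: in the deletion case the rank function of $M\backslash X$ is the ambient one, so after the substitution the right side of \eqref{eq:def-bidendriform2} carries the condition ``$A-B$ dependent'' while the left side carries ``$A$ dependent'', and these do not agree, since a dependent $A$ can have $A-B$ independent. Concretely, for the restriction-deletion splitting one finds $(\Delta^{(II)}_{\succ}\otimes Id)\circ\Delta^{(II)}_{\prec}(U_{1,3}) = 6\, U_{1,1}\otimes U_{1,1}\otimes U_{1,1}$ while $(Id\otimes\Delta^{(II)}_{\prec})\circ\Delta^{(II)}_{\succ}(U_{1,3}) = 0$, because $U_{1,2}$ has no dependent proper nonempty subset. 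It is precisely the relative rank of the contraction --- your computation that, given $|X|=r(X)$, the condition $|Y|>r(X\cup Y)-r(X)$ is equivalent to $|X\cup Y|>r(X\cup Y)$ --- that restores the equality of index sets in the restriction-contraction case; the analogous step is exactly where the deletion argument breaks. So present your argument as self-contained (state the commutation identity as a companion to Lemma \ref{lm:coassoc}, then run your rank translation), rather than as a copy of the restriction-deletion proof.
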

\begin{proof}
The proof of Proposition \ref{prop:dendri-sel-del} applies for this case as well.
\end{proof}

\medskip
This dendriform coalgebra is not a codendriform bialgebra for the same reasons as 
in the case of the dendriform restriction-deletion matroid coalgebra of the previous subsection.

\section{A matroid polynomial}\label{sec:mat-pol}

In this section we use an appropriate character of the restriction-deletion matroid Hopf algebra in order to define a certain matroid polynomial.





\begin{definition}
Let $M=(E,\I)$ be a matroid. Let $A\subseteq E$. One defines \begin{eqnarray}
c(A) &:=& \#\{e \in A \mid \{e\} \in \I\}, \\
l(A) &:=& \#\{e \in A \mid \{e\} \not\in \I\}.
\end{eqnarray}
\end{definition}

\begin{example}
\label{ex}
Let $M$ be a matroid on the ground set $\{1,2,3,4\}$ and the collection of independent sets be given by 
$\I = \{\emptyset,\{1\},\{2\},\{3\}\}$.
One has $l(E) = 3$ and $c(E)=1$.
Let $A=\{1,2,4\}$. One then has 
$l(A) = 2$ and $c(A)=1$.
\end{example}

\begin{remark}
For graphs, $l$ counts the number of self-loops and $c$ counts the number of edges which are not self-loops. 
\end{remark}

Note that the matroid of Example \ref{ex} is a graphic matroid, see Fig. \ref{fig:graphicMatroid}.
One has, as already noted above  $l(E) = 3$ and $c(E)=1$.

\begin{figure}[!ht]
\begin{center}
\begin{tikzpicture}
\node (1) at (0,0) [inner sep=1pt,circle,fill=black] {};
\node (2) at (2,0) [inner sep=1pt,circle,fill=black] {};
\draw (1) to [out=60,in=120]  node [above] {\tiny 1} (2)
		   (1) to [out=-60,in=-120] node [below] {\tiny 3}(2)
			(1) to node [above] {\tiny 2} (2)
	    	(2) to [out=30,in=90] +(1,0) node [right] {\tiny 4} to [out=-90,in=-30] (2);
\end{tikzpicture}
\caption{The graph corresponding to the matroid of Example \ref{ex}}\label{fig:graphicMatroid}
\end{center}
\end{figure}
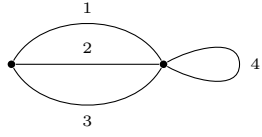

\begin{remark}\label{rm:infinitesimal}
\begin{itemize}
\item[1)] If $\{e\} \in \I$, then $M\mid e = U_{1,1}$.
\item[2)] If $\{e\} \not\in \I$, then $M\mid e = U_{0,1}$.
\end{itemize}
\end{remark}

Let $M=(E,\I)$ be a matroid.
Let us define the following polynomial:
\begin{equation}
\label{eq:poly}
P_M(x,y) := \sum_{A \subseteq E} (x-1)^{c(E)-c(A)} (y-1)^{l(A)}.
\end{equation}

\begin{remark}
Note that the definition above mimics the definition of the Tutte polynomial, where the role of the rank and of the nullity are played by the parameters $c$ and $l$, respectively.
\end{remark}

\begin{example}
One has:
\begin{equation}
P_{U_{2,4}} (x,y) = (x-1)^4 + 4(x-1)^3 + 6(x-1)^2 + 4(x-1) + 1 = x^4.
\end{equation}
\end{example}


As in \cite{AAM}, we now define: 

\begin{eqnarray}
\label{eq:def-dloop}
\delta_{\mathrm{loop}} (M) := \begin{cases}
1_\KK \mbox{ if } M 
= U_{0,1},\\
0_\KK \mbox{ otherwise},
\end{cases}
\end{eqnarray}

and

\begin{eqnarray}
\label{eq:def-dtree}
\delta_{\mathrm{coloop}} (M) := \begin{cases}
1_\KK \mbox{ if } M 
= U_{1,1},\\
0_\KK \mbox{ otherwise}.
\end{cases}
\end{eqnarray}
It is easy to check that these maps are 
{\it infinitesimal characters} of the restriction-deletion matroid Hopf algebra. 

\medskip
Following \cite{AAM} again, we define the map:
\begin{eqnarray}
\label{def-alpha}
\alpha(x,y,s,M) := \mbox{exp}_\ast s\{\delta_{\mathrm{coloop}}
+(y-1)\delta_{\mathrm{loop}}\}\ast \mbox{exp}_\ast s\{(x-1)\delta_{\mathrm{coloop}}+\delta_{\mathrm{loop}}\}
(M).
\end{eqnarray}
Using the definition of a Hopf algebra character
 one can directly check that the map \eqref{def-alpha} defined above is a character.


\medskip
Let us now show the relation between the map $\alpha$ 
and the polynomial in \eqref{eq:poly}.

\begin{lemma}\label{lm:exp} One has
\begin{equation}\label{eq:exp}
exp_\ast \{a\delta_{coloop} + b \delta_{loop}\} (M) = a^{c(M)}b^{l(M)}.
\end{equation}
\end{lemma}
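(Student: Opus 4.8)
The plan is to expand the convolution exponential as a power series and to show that, on a matroid whose ground set $E$ has cardinal $n_0$, exactly one term of the series survives. Writing $\delta := a\,\delta_{\mathrm{coloop}} + b\,\delta_{\mathrm{loop}}$ and recalling that the convolution of two $\KK$-valued maps is $(\phi \ast \psi)(M) = \sum_{A \subseteq E} \phi(M\mid A)\,\psi(M\backslash A)$ (using the coproduct $\Delta^{(II)}$), one has
\[
\exp_\ast\{\delta\}(M) = \epsilon(M) + \sum_{n \geq 1} \frac{1}{n!}\,\delta^{\ast n}(M).
\]
Because the bialgebra is graded by the cardinal of the ground set and connected (Theorem \ref{thm:Hopf}), this series is, on each fixed $M$, a finite sum, so no convergence issue arises.

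First I would make the $n$-fold convolution explicit. Iterating the coproduct and using coassociativity (Proposition \ref{prop:coass}), together with Lemma \ref{lm:res-del} to read deletions as restrictions and the composition rules of Lemma \ref{lm:coassoc}, a straightforward induction yields
\[
\delta^{\ast n}(M) = \sum_{(S_1, \ldots, S_n)} \prod_{i=1}^{n} \delta(M\mid S_i),
\]
the sum being taken over all ordered set partitions $(S_1, \ldots, S_n)$ of $E$ into $n$ possibly empty blocks, each factor being the restriction of $M$ to the corresponding block. (For $n = 2$ this is the definition of $\Delta^{(II)}$; for $n = 3$ it is precisely the three-fold expression appearing in the proof of Proposition \ref{prop:coass}.)

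Next I would exploit that $\delta$ is concentrated on one-element matroids. By definition $\delta(\11) = 0$, and $\delta$ vanishes on every matroid whose ground set has at least two elements, since $\delta_{\mathrm{coloop}}$ and $\delta_{\mathrm{loop}}$ are supported on $U_{1,1}$ and $U_{0,1}$ only. Hence a summand $\prod_i \delta(M\mid S_i)$ can be nonzero only when every block $S_i$ is a singleton; for $|E| = n_0$ this forces $n = n_0$ and the $S_i$ to be a reordering of the one-element subsets of $E$. By Remark \ref{rm:infinitesimal}, for a one-element $e$ one has $\delta(M\mid e) = a$ when $\{e\} \in \I$ and $\delta(M\mid e) = b$ otherwise; so along any such ordered partition $\prod_{e \in E} \delta(M\mid e) = a^{c(M)} b^{l(M)}$. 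As there are $n_0!$ orderings of the $n_0$ singletons, I obtain $\delta^{\ast n_0}(M) = n_0!\, a^{c(M)} b^{l(M)}$ while $\delta^{\ast n}(M) = 0$ for every $n \neq n_0$.

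Substituting into the series gives $\exp_\ast\{\delta\}(M) = \frac{1}{n_0!}\,\delta^{\ast n_0}(M) = a^{c(M)} b^{l(M)}$, which is the assertion; the degenerate case $M = \11$ (where $n_0 = 0$ and the empty product equals $1 = \epsilon(\11)$) is consistent. I expect the only genuinely delicate step to be the induction establishing the ordered-set-partition form of $\delta^{\ast n}$: one must check that restricting inside a block and then deleting the complement really reproduces restriction to a sub-block, which is exactly what Lemmas \ref{lm:res-del} and \ref{lm:coassoc} guarantee. Once that normal form is secured, the vanishing of $\delta$ away from singletons turns the rest into a one-line count.
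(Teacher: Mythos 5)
Your proof is correct and is essentially the argument the paper intends: the paper's own proof simply defers to Lemma 4.1 of \cite{AAM}, whose method is exactly this expansion of the convolution exponential, with the infinitesimal characters supported on one-element matroids so that only complete decompositions into singletons survive. Your write-up fills in the restriction-deletion details (in particular that, by Lemmas \ref{lm:res-del} and \ref{lm:coassoc}, all tensor factors of the iterated $\Delta^{(II)}$ are restrictions of $M$ itself, which is precisely why $c$ and $l$ replace rank and nullity), so it matches the paper's approach rather than departing from it.
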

\begin{proof}
The proof of Lemma $4.1$ of \cite{AAM} for the restriction-contraction  matroid Hopf algebra applies for the restriction-deletion matroid Hopf algebra as well (where one takes again into consideration that the role of the rank and of the nullity are played by the parameters $c$ and $l$).
\end{proof}

\begin{proposition}\label{prop:alpha}
One has
\begin{equation}
\alpha(x,y,s,M) = s^{|E|}P_M(x,y).
\end{equation} 
\end{proposition}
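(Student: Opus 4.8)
The plan is to unfold the two $\ast$-exponentials by means of Lemma \ref{lm:exp}, expand the outer convolution product through the coproduct $\Delta^{(II)}$, and then recognize the resulting sum over subsets of $E$ as $P_M(x,y)$. Writing $F := \exp_\ast s\{\delta_{\mathrm{coloop}} + (y-1)\delta_{\mathrm{loop}}\}$ and $G := \exp_\ast s\{(x-1)\delta_{\mathrm{coloop}} + \delta_{\mathrm{loop}}\}$, the definition of the convolution product relative to $\Delta^{(II)}$ gives
\[
\alpha(x,y,s,M) = (F \ast G)(M) = \sum_{A \subseteq E} F(M\mid A)\, G(M\backslash A).
\]
I would first apply Lemma \ref{lm:exp} to each factor separately. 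Taking $a = s$, $b = s(y-1)$ yields $F(N) = s^{\,c(N)+l(N)}(y-1)^{l(N)}$ for any matroid $N$, and taking $a = s(x-1)$, $b = s$ yields $G(N) = s^{\,c(N)+l(N)}(x-1)^{c(N)}$.

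The next step is to evaluate $c$ and $l$ on the minors $M\mid A$ and $M\backslash A$. The key observation is that the independence of a singleton is a purely local condition: for $e \in A$ one has $\{e\} \in \I|_A$ if and only if $\{e\} \in \I$, and for $e \in E-A$ one has $\{e\} \in \I'$ if and only if $\{e\} \in \I$. It follows that $c(M\mid A) = c(A)$ and $l(M\mid A) = l(A)$, while $c(M\backslash A) = c(E)-c(A)$ and $l(M\backslash A) = l(E)-l(A)$. Since $c$ and $l$ together count all ground-set elements, this also gives $c(M\mid A)+l(M\mid A) = |A|$ and $c(M\backslash A)+l(M\backslash A) = |E|-|A|$.

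Substituting these into the evaluated exponentials, the summand becomes
\[
F(M\mid A)\, G(M\backslash A) = s^{\,|A|}(y-1)^{l(A)} \cdot s^{\,|E|-|A|}(x-1)^{c(E)-c(A)},
\]
so that the powers of $s$ collapse to the global factor $s^{\,|E|}$, which is independent of $A$. Pulling this factor out of the sum leaves precisely $\sum_{A \subseteq E}(x-1)^{c(E)-c(A)}(y-1)^{l(A)} = P_M(x,y)$, establishing the identity. The only delicate point is the bookkeeping in this last step: one must confirm both that the exponents of $s$ cancel to exactly $|E|$ and that the loop/coloop counts on the minors match those appearing in the definition \eqref{eq:poly} of $P_M$. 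Once these two checks are in place the result is immediate, and I do not expect any deeper obstruction, since the statement is essentially a transcription of the multiplicativity of the $\ast$-exponential together with the local nature of the singleton-independence test.
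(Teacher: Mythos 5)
Your proof is correct and follows essentially the same route as the paper's: the paper simply defers to the proof of Proposition 4.3 of \cite{AAM} (adapted by replacing rank and nullity with $c$ and $l$), and that argument is exactly the one you spell out, namely expanding the convolution through $\Delta^{(II)}$, evaluating each factor via Lemma \ref{lm:exp}, and using the locality of singleton independence to identify $c$ and $l$ on the minors $M\mid A$ and $M\backslash A$. Your write-up has the added merit of making explicit the bookkeeping (the cancellation of the powers of $s$ to $s^{|E|}$ and the identities $c(M\mid A)=c(A)$, $c(M\backslash A)=c(E)-c(A)$) that the paper leaves to the citation.
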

\begin{proof}
The proof of Proposition $4.3$ of \cite{AAM} for the restriction-contraction  matroid Hopf algebra applies for the restriction-deletion matroid Hopf algebra as well (where one takes again into consideration that the role of the rank and of the nullity are played by the parameters $c$ and $l$).
\end{proof}


 One further has:

\begin{corollary}
Let $M_1$ and $M_2$ be two matroids. One has \begin{equation}
P_{M_1 \oplus M_2} (x,y) = P_{M_1} (x,y) P_{M_2} (x,y).
\end{equation}
\end{corollary}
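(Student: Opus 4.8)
The plan is to exploit the fact, established just above the corollary, that the map $\alpha(x,y,s,\cdot)$ of \eqref{def-alpha} is a \emph{character} of the restriction-deletion Hopf algebra $(k(\widetilde{\M}),\oplus,\Delta^{(II)})$. Since a character is by definition an algebra morphism for the product $\oplus$, it is multiplicative, that is $\alpha(x,y,s,M_1\oplus M_2)=\alpha(x,y,s,M_1)\,\alpha(x,y,s,M_2)$, the product on the right being ordinary multiplication in the target ring. This single observation carries essentially all the weight of the proof.

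First I would invoke Proposition \ref{prop:alpha} to rewrite each of the three occurrences of $\alpha$. The left-hand side becomes $s^{|E_1\cup E_2|}P_{M_1\oplus M_2}(x,y)$, whereas the product on the right becomes $s^{|E_1|}P_{M_1}(x,y)\cdot s^{|E_2|}P_{M_2}(x,y)$. Because $E_1$ and $E_2$ are disjoint (this being part of the very definition of the direct sum), one has $|E_1\cup E_2|=|E_1|+|E_2|$, so the powers of $s$ on the two sides agree. Cancelling the common factor $s^{|E_1|+|E_2|}$ then yields the desired identity $P_{M_1\oplus M_2}=P_{M_1}P_{M_2}$.

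Strictly speaking there is no genuine obstacle once the character property is in hand; the only point that deserves care is to confirm that the multiplicativity of $\alpha$ is really the statement $\alpha(M_1\oplus M_2)=\alpha(M_1)\,\alpha(M_2)$, i.e. that the $\ast$-exponentials appearing in \eqref{def-alpha} produce an honest algebra morphism and not merely a linear map. This is precisely what ``character'' means in this setting, and it is a consequence of the general fact that the $\ast$-exponential of an infinitesimal character of a connected graded Hopf algebra is a character.

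As a self-contained alternative, one may argue combinatorially and directly from \eqref{eq:poly}. The key step is to establish that both statistics are additive under direct sum: writing any $A\subseteq E_1\cup E_2$ uniquely as $A=A_1\sqcup A_2$ with $A_i=A\cap E_i$, one checks from the definition of $M_1\oplus M_2$ that a singleton $\{e\}$ is independent in the direct sum exactly when it is independent in its factor, whence $c(A)=c(A_1)+c(A_2)$ and $l(A)=l(A_1)+l(A_2)$, and in particular $c(E)=c(E_1)+c(E_2)$. Substituting these into \eqref{eq:poly} and factoring the resulting double sum over $(A_1,A_2)$ reproduces $P_{M_1}(x,y)P_{M_2}(x,y)$. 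In this second route the additivity of $c$ and $l$ is the only thing requiring verification, and it reduces to the elementary statement about independent singletons in a direct sum.
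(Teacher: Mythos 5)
Your proof is correct and follows exactly the paper's route: the paper likewise deduces the identity directly from the character property of $\alpha$ together with Proposition \ref{prop:alpha}, just as in your first argument (your handling of the powers of $s$ via disjointness of the ground sets makes explicit what the paper leaves implicit). The self-contained combinatorial alternative via additivity of $c$ and $l$ is a valid bonus, but the core of your proposal coincides with the paper's proof.
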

\begin{proof}
The conclusion follows directly from the definition of a Hopf algebra character and from Proposition \ref{prop:alpha} above.
\end{proof}


One then has:
\begin{eqnarray}
\label{eq:alpha}
\alpha(x,y,s,M) &=& 
\mbox{exp}_{\ast}
\left(s(\delta_{\mathrm{coloop}}+(y-1)\delta_{\mathrm{loop}})\right)\ast\mbox{exp}_{\ast}\left(s(-\delta_{\mathrm{coloop}}+\delta_{\mathrm{loop}})\right)
\nonumber\\
&\ast&\mbox{exp}_{\ast}\left(s(\delta_{\mathrm{coloop}}-\delta_{\mathrm{loop}})\right)\ast
\mbox{exp}_{\ast}\left(s((x-1)\delta_{\mathrm{coloop}}+\delta_{\mathrm{loop}})\right).
\end{eqnarray}


One has:
\begin{corollary}
 The polynomial in Equation \eqref{eq:poly} satisfies
\begin{equation}
 P_M(x,y)=\sum_{A\subset E} P_{M|A}(0,y) P_{M\backslash A}(x,0).
\end{equation}
\end{corollary}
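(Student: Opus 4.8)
The plan is to extract the identity directly from the four-factor expression \eqref{eq:alpha} for the character $\alpha$, exploiting associativity of the convolution product rather than manipulating $P_M$ by hand. The key observation is that the two inner pairs of exponentials in \eqref{eq:alpha} are themselves specializations of $\alpha$. Indeed, setting $x=0$ in the definition \eqref{def-alpha} turns its second factor into $\exp_\ast s\{-\delta_{\mathrm{coloop}}+\delta_{\mathrm{loop}}\}$, so that $\alpha(0,y,s,\cdot)$ coincides with the convolution of the first two factors of \eqref{eq:alpha}; symmetrically, setting $y=0$ turns the first factor of \eqref{def-alpha} into $\exp_\ast s\{\delta_{\mathrm{coloop}}-\delta_{\mathrm{loop}}\}$, identifying $\alpha(x,0,s,\cdot)$ with the convolution of the last two factors. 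First I would record these two identifications.

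Next I would regroup \eqref{eq:alpha}, using associativity of $\ast$, as
\[
\alpha(x,y,s,M)=\bigl(\alpha(0,y,s,\cdot)\ast\alpha(x,0,s,\cdot)\bigr)(M),
\]
and unfold the outer convolution through the coproduct $\Delta^{(II)}$ of \eqref{eq:coprod-sel-del}. Since for scalar-valued maps $(f\ast g)(M)=\sum_{A\subseteq E}f(M\mid A)\,g(M\backslash A)$, this gives
\[
\alpha(x,y,s,M)=\sum_{A\subseteq E}\alpha(0,y,s,M\mid A)\,\alpha(x,0,s,M\backslash A).
\]

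Finally I would apply Proposition \ref{prop:alpha} factor by factor, using that $M\mid A$ has ground set $A$ and $M\backslash A$ has ground set $E-A$, so that $\alpha(0,y,s,M\mid A)=s^{|A|}P_{M\mid A}(0,y)$ and $\alpha(x,0,s,M\backslash A)=s^{|E|-|A|}P_{M\backslash A}(x,0)$. Substituting, and using $\alpha(x,y,s,M)=s^{|E|}P_M(x,y)$ once more on the left, yields
\[
s^{|E|}P_M(x,y)=s^{|E|}\sum_{A\subseteq E}P_{M\mid A}(0,y)\,P_{M\backslash A}(x,0),
\]
and cancelling $s^{|E|}$ proves the claim. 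I expect the only delicate point to be this last $s$-grading bookkeeping: it is precisely because $|A|+|E-A|=|E|$ for every $A$ that the factor $s^{|E|}$ comes out uniformly on the right and cancels, so that the identity holds between polynomials in $x,y$ and not merely after fixing $s$. As a cross-check one can note that $P_N(x,y)=x^{c(N)}y^{l(N)}$ is a monomial, whence $P_{M\mid A}(0,y)\,P_{M\backslash A}(x,0)$ vanishes unless $A$ is exactly the set of loops of $M$, recovering the single surviving term.
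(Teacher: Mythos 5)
Your proof is correct and is essentially the paper's own argument: the paper simply invokes the proof of Corollary 4.5 of \cite{AAM}, which proceeds exactly as you do, by reading the displayed four-factor identity \eqref{eq:alpha} as the regrouping $\alpha(x,y,s,\cdot)=\alpha(0,y,s,\cdot)\ast\alpha(x,0,s,\cdot)$, unfolding the convolution through $\Delta^{(II)}$, and applying Proposition \ref{prop:alpha} to each tensor factor. Your spelled-out version, including the $s$-grading bookkeeping $|A|+|E-A|=|E|$, is a faithful expansion of that same route.
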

\begin{proof}
The proof of Corollary $4.5$ of \cite{AAM} applies for the polynomial $P$.
\end{proof}
\begin{remark}
The identity above is the analog of a convolution identity for the Tutte polynomial proved initially in \cite{ELV} and \cite{KRS}. The difference comes from replacing the matroid contraction, in the Tutte polynomial case, with the matroid deletion, in the last factor of the identity.
\end{remark}

\medskip
Note that $P_M (x,y) \neq P_{M/e}(x,y) + P_{M\backslash e}(x,y)$ where $e$ is neither a loop nor a coloop. 

\medskip
Let us now give the recursive relations satisfied by the polynomial $P_M(x,y)$. 

\begin{proposition}
One has \begin{eqnarray}
P_M (x,y) &=& y P_{M\backslash e} (x,y) \mbox{ if } e \mbox{ is a loop,}\\
P_M (x,y) &=& x P_{M\backslash e} (x,y) \mbox{ otherwise.}
\end{eqnarray}
\end{proposition}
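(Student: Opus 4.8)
The plan is to compute $P_M(x,y)$ directly from its definition \eqref{eq:poly} by splitting the sum over subsets $A \subseteq E$ according to whether $e \notin A$ or $e \in A$. The entire argument rests on understanding how the two statistics $c$ and $l$ behave, first under the deletion $M \mapsto M\backslash e$, and second under the operation of adjoining $e$ to a subset $A' \subseteq E - \{e\}$. I would also use the dichotomy that $e$ is a loop precisely when $\{e\} \notin \I$ (so that $e$ contributes to $l$), and is a non-loop precisely when $\{e\} \in \I$ (so that $e$ contributes to $c$).

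First I would record the behaviour under deletion. Since the independent sets of $M\backslash e$ are exactly the members of $\I$ contained in $E - \{e\}$, a singleton $\{f\}$ with $f \neq e$ is independent in $M\backslash e$ if and only if it is independent in $M$. Hence, for every $A \subseteq E - \{e\}$, the values $c(A)$ and $l(A)$ agree whether computed in $M$ or in $M\backslash e$; this is what will turn a sum over subsets of $E - \{e\}$ into $P_{M\backslash e}(x,y)$. The only global quantity that can change is $c(E)$: deleting $e$ lowers it by one when $\{e\} \in \I$ and leaves it unchanged when $e$ is a loop.

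Next I would treat the two cases in parallel. For $A \ni e$, write $A = A' \cup \{e\}$ with $A' \subseteq E - \{e\}$; adjoining $e$ increases $l$ by one and leaves $c$ unchanged when $e$ is a loop, and increases $c$ by one and leaves $l$ unchanged otherwise. Feeding this, together with the value of $c(E)$ recorded above, into the two sub-sums, each sub-sum collapses to a multiple of $P_{M\backslash e}(x,y)$. In the loop case the contributions are $P_{M\backslash e}$ (from $A \not\ni e$) and $(y-1)P_{M\backslash e}$ (from $A \ni e$), summing to $y\,P_{M\backslash e}$; in the remaining case they are $(x-1)P_{M\backslash e}$ and $P_{M\backslash e}$, summing to $x\,P_{M\backslash e}$.

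The computation itself is routine; the one place demanding care is the bookkeeping of the exponent $c(E) - c(A)$. Because this exponent involves the total statistic $c(E)$ and not merely $c(A)$, the shift of $c(E)$ under deletion and the shift of $c(A)$ under adjoining $e$ must be accounted for simultaneously, and it is precisely the cancellation between these two shifts (in the non-loop case, where both rise by one so that $c(E)-c(A)$ is unchanged for $A \ni e$) that produces the clean factor $x$ rather than some $x$-dependent polynomial. Keeping these two shifts aligned is the only genuine obstacle.
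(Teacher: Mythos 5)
Your proof is correct and is essentially the paper's own argument: split the sum defining $P_M$ over subsets $A \subseteq E$ according to whether $e \in A$, track the shifts of $c(A)$, $l(A)$ and of $c(E)$ under deletion of $e$, and collapse each of the two sub-sums to $(y-1)P_{M\backslash e}$ or $P_{M\backslash e}$ (loop case), respectively $(x-1)P_{M\backslash e}$ or $P_{M\backslash e}$ (non-loop case). The only differences are cosmetic: the paper writes out the loop case and dismisses the other with ``similarly'', whereas you treat both cases in parallel and also make explicit the step the paper leaves implicit, namely that $c$ and $l$ evaluated on subsets of $E-\{e\}$ are the same whether computed in $M$ or in $M\backslash e$.
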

\begin{proof}
If $e$ is a loop, then $c(E-e) = c(E)$. One now has 
\begin{eqnarray}
P_M(x,y) &=& \sum_{\substack{A \subseteq E\\e\in A}} (x-1)^{c(E)-c(A)}(y-1)^{l(A)} + \sum_{\substack{A \subseteq E\\e\not\in A}} (x-1)^{c(E)-c(A)}(y-1)^{l(A)}\cr
&=& \sum_{\substack{A' \subseteq E-e}} (x-1)^{c(E-e)-c(A')}(y-1)^{l(A')+1} + \sum_{\substack{A \subseteq E-e}} (x-1)^{c(E)-c(A)}(y-1)^{l(A)}\cr
&=& y\sum_{\substack{A \subseteq E-e}} (x-1)^{c(E-e)-c(A)}(y-1)^{l(A)}\cr
&=& yP_{M \backslash e} (x,y).
\end{eqnarray}

Similarly, if $e$ is not a loop, one then has $P_M (x,y) = x P_{M\backslash e} (x,y)$.
\end{proof}

\medskip
\begin{corollary}
One has 
\begin{equation}
P_M(x,y) = x^{c(E)}y^{l(E)}.
\end{equation}
\end{corollary}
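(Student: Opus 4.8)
The plan is to prove the closed form $P_M(x,y) = x^{c(E)} y^{l(E)}$ directly by induction on the cardinality of the ground set $E$, using the two recursive relations established in the immediately preceding Proposition. These relations, $P_M(x,y) = y\, P_{M\backslash e}(x,y)$ when $e$ is a loop and $P_M(x,y) = x\, P_{M\backslash e}(x,y)$ otherwise, are exactly the tool needed to peel off one element of the ground set at a time.

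First I would set up the base case: when $E = \emptyset$, the sum in \eqref{eq:poly} runs over the single subset $A = \emptyset$, giving $P_M(x,y) = (x-1)^{0}(y-1)^{0} = 1$, while $c(E) = l(E) = 0$, so $x^{0}y^{0} = 1$; the two agree. For the inductive step, I would fix a matroid $M = (E,\I)$ with $|E| \geq 1$, choose any element $e \in E$, and apply the appropriate recursion. If $e$ is a loop, deletion removes one element counted by $l$, so that $l(E) = l(E - e) + 1$ and $c(E) = c(E - e)$; the induction hypothesis applied to $M\backslash e$ gives $P_{M\backslash e}(x,y) = x^{c(E-e)} y^{l(E-e)}$, and multiplying by $y$ yields exactly $x^{c(E)} y^{l(E)}$. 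If $e$ is not a loop, then $\{e\} \in \I$, so $c(E) = c(E - e) + 1$ and $l(E) = l(E - e)$; multiplying the induction hypothesis by $x$ again reproduces the claimed form.

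The only point requiring care, and the closest thing to an obstacle, is the careful bookkeeping of how $c$ and $l$ behave under deletion of $e$. Concretely, one must observe that deleting $e$ from $M$ does not alter whether any \emph{other} singleton $\{f\}$ with $f \neq e$ is independent, since independence of $\{f\}$ in $M\backslash e$ coincides with independence of $\{f\}$ in $M$ (both are governed by $\I$ restricted to subsets of $E - e$). This ensures $c(E - e)$ and $l(E - e)$ computed inside $M\backslash e$ match the counts $c$ and $l$ restricted to $E - e$ inside $M$, which is precisely what the recursion in the preceding Proposition already assumes. Given that compatibility, the induction closes cleanly and the corollary follows.

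Note finally that this corollary is consistent with the earlier computation $P_{U_{2,4}}(x,y) = x^{4}$, since in $U_{2,4}$ every singleton is independent, giving $c(E) = 4$ and $l(E) = 0$.
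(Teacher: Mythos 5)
Your proof is correct and follows exactly the route the paper intends: the corollary is stated immediately after the recursive relations $P_M = y\,P_{M\backslash e}$ (loop) and $P_M = x\,P_{M\backslash e}$ (non-loop), and the claim follows by iterating them down to the empty matroid, which is precisely your induction. Your bookkeeping of how $c$ and $l$ behave under deletion (independence of a singleton $\{f\}$, $f \neq e$, is unchanged in $M\backslash e$) is the right point to verify, and it is handled correctly.
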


\medskip
Finally, one notices that
$P_M(x,y) \neq P_{M^\ast} (x,y),$
and $P_M(x,y) \neq P_{M^\ast} (y,x).$
This follows, for example, from analyzing the cases of the matroids $U_{0,1} = U_{1,1}^\ast$ and $U_{1,2} = U_{1,2}^\ast$.

%
%
%
%

\bigskip

\noindent
{\bf Acknowledgements:} The authors kindly acknowledge G. Duchamp and G. Koshevoy for discussions on positroids, discussions which have eventually led to our work on matroids. A. Tanasa is partially funded by the grants ANR JCJC “CombPhysMat2Tens” and PN 09 37 01 02.

\bigskip

{\small
\noindent
Nguyen Hoang-Nghia and Christophe Tollu\\
LIPN, Institut Galil\'ee, CNRS UMR 7030, \\
Univ. Paris 13, Sorbonne Paris Cit\'e,\\
99 av. J.-B. Clement, 93430 Villetaneuse, France, EU

\medskip

\noindent
Adrian Tanasa\\
LIPN, Institut Galil\'ee, CNRS UMR 7030, \\
Univ. Paris 13, Sorbonne Paris Cit\'e,\\
99 av. J.-B. Clement, 93430 Villetaneuse, France, EU\\
Horia Hulubei National Institute for Physics and Nuclear Engineering,\\ 
P.O.B. MG-6,
077125 Magurele, Romania, EU}

\end{document}